\let\oldtocsection=\tocsection
\let\oldtocsubsection=\tocsubsection
\let\oldtocsubsubsection=\tocsubsubsection
\renewcommand{\tocsection}[2]{\hspace{0em}\oldtocsection{#1}{#2}\bfseries}
\renewcommand{\tocsubsection}[2]{\hspace{1.8em}\oldtocsubsection{#1}{#2}}
\renewcommand{\tocsubsubsection}[2]{\hspace{4.4em}\oldtocsubsubsection{#1}{#2}}
\renewcommand\subsection{\@startsection{subsection}{2}%
  \z@{-.5\linespacing\@plus-.7\linespacing}{.5\linespacing}%
  {\normalfont\scshape}}
\renewcommand\subsubsection{\@startsection{subsubsection}{3}%
  \z@{.5\linespacing\@plus.7\linespacing}{.5\linespacing}%
  {\normalfont\scshape}}
\newtheorem{theorem}{Theorem}[section]
\newtheorem{lemma}[theorem]{Lemma}
\newtheorem{remark}[theorem]{Remark}
\newtheorem{Fact}[theorem]{Fact}
\newtheorem{Claim}[theorem]{Claim}
\theoremstyle{definition}
\newtheorem{definition}[theorem]{Definition}
\newcommand{\dom}[1]{\ensuremath{\mathrm{dom}}(#1)}
\newcommand{\rng}[1]{\ensuremath{\mathrm{rng}}(#1)}
\newcommand{\set}[2]{\ensuremath{\{#1 \,|\, #2 \}}}
\newcommand{\seq}[2]{\ensuremath{\langle #1 \,|\, #2 \rangle}}
\newcommand{\restr}[2]{\ensuremath{#1 \! \upharpoonright \! #2}}
\newcommand{\Iff}{\Leftrightarrow}
\newcommand{\sub}{\subseteq}
\newcommand{\la}{\langle}
\newcommand{\ra}{\rangle}
\newcommand{\then}{\rightarrow}
\newcommand{\conc}{{}^{\smallfrown}}
\newcommand{\bb}{\mathbb}
\newcommand{\beq}{\begin{equation}}
\newcommand{\eeq}{\end{equation}}
\newcommand{\brm}{\begin{remark}\begin{rm}}
\newcommand{\erm}{\end{rm}\end{remark}}
\newcommand{\mx}{\mathrm}
\newcommand{\bce}{\begin{compactenum}}
\newcommand{\ece}{\end{compactenum}}
\newcommand{\cof}{\mathrm{cof}}
\newcommand{\cf}{\mathrm{cf}}
\newcommand{\Add}{\mathrm{Add}}
\newcommand{\Q}{\bb{Q}}
\renewcommand{\P}{\bb{P}}
\newcommand{\M}{\bb{M}}
\newcommand{\x}{\times}
\newcommand{\TP}{{\sf TP}}
\newcommand{\ZFC}{{\sf ZFC}}
\newcommand{\GCH}{{\sf GCH}}
\newcommand{\SR}{{\sf SR}}
\newcommand{\AP}{{\sf AP}}
\newcommand{\DSS}{{\sf DSS}}
\newcommand{\wKH}{{\sf wKH}}
\newcommand{\CSR}{\mathsf{CSR}}
\newcommand{\GMP}{\mathsf{GMP}}
\newcommand{\PFA}{\mathsf{PFA}}
\newcommand{\mf}{\mathfrak}
\newcommand{\UU}{\dot{U}}
\newcommand{\USupt}{\mathrm{USupt}}
\newcommand{\RSupt}{\mathrm{RSupt}}
\newcommand{\SSupt}{\mathrm{SSupt}}
\newcommand{\EVEN}{\mathrm{EVEN}}
\newcommand{\ODD}{\mathrm{ODD}}
\newcommand{\0}{\emptyset}
\newcommand{\uu}{\mathfrak{u}(\kappa)}
\newcommand{\lot}[1]{p^{\then #1}}
\DeclareMathOperator{\da}{\mathrel{\downarrow}}
\DeclareMathOperator{\bigger}{>}
\begin{document}

\title[Generalized cardinal invariants \ldots]{Generalized cardinal invariants for an inaccessible $\kappa$ with compactness at $\kappa^{++}$}

\author{Radek Honzik}
\address[Honzik]{
Charles University, Department of Logic,
Celetn{\' a} 20, Prague~1, 
116 42, Czech Republic
}
\email{radek.honzik@ff.cuni.cz}
\urladdr{logika.ff.cuni.cz/radek}

\author{{\v S}{\'a}rka Stejskalov{\'a}}
\address[Stejskalov{\'a}]{
Charles University, Department of Logic,
Celetn{\' a} 20, Prague~1, 
116 42, Czech Republic
}
\email{sarka.stejskalova@ff.cuni.cz}
\urladdr{logika.ff.cuni.cz/sarka}

\thanks{
  Both authors were supported by FWF/GA{\v C}R grant \emph{Compactness principles and combinatorics} (19-29633L), and R.\ Honzik  also by GA{\v C}R grant \emph{The role of set theory in modern mathematics} (24-12141S). To appear in \emph{Archive for Mathematical Logic}, version April 2025.}

\begin{abstract}
We study the relationship between non-trivial values of generalized cardinal invariants at an inaccessible cardinal $\kappa$ and compactness principles at $\kappa^+$ and $\kappa^{++}$. Let $\TP(\kappa^{++})$,  $\SR(\kappa^{++})$ and $\neg \wKH(\kappa^+)$ denote the tree property and stationary reflection on $\kappa^{++}$ and the negation of the weak Kurepa Hypothesis on $\kappa^+$, respectively.

We show that if the existence of a supercompact cardinal $\kappa$ with a weakly compact cardinal $\lambda$ above $\kappa$ is consistent, then the following are consistent as well (where $\mf{t}(\kappa)$ and $\uu$ are the tower number and the ultrafilter number, respectively): 

(i) There is an inaccessible cardinal $\kappa$ such that $\kappa^+ < \mf{t}(\kappa)= \mf{u}(\kappa)< 2^\kappa$ and $\SR(\kappa^{++})$ hold, and

(ii)  There is an inaccessible cardinal $\kappa$ such that $\kappa^+ = \mf{t}(\kappa) < \mf{u}(\kappa)< 2^\kappa$ and $\SR(\kappa^{++}), \TP(\kappa^{++})$ and $\neg \wKH(\kappa^+)$ hold.

The cardinals $\mf{u}(\kappa)$ and $2^\kappa$ can have any reasonable values in these models. We obtain these results by combining the forcing construction from \cite{F:u} due to Brooke-Taylor, Fischer, Friedman and Montoya with the Mitchell forcing and with (new and old) indestructibility results related to $\TP(\kappa^{++})$, $\SR(\kappa^{++})$ and $\neg \wKH(\kappa^+)$. Apart from $\mf{u}(\kappa)$ and $\mf{t}(\kappa)$ we also compute the values of $\mf{b}(\kappa)$, $\mf{d}(\kappa)$, $\mf{s}(\kappa)$, $\mf{r}(\kappa)$, $\mf{a}(\kappa)$, $\mx{cov}(\mathcal M_\kappa)$, $\mx{add}(\mathcal M_\kappa)$, $\mx{non}(\mathcal M_\kappa)$, $\mx{cof}(\mathcal M_\kappa)$ which will all be equal to $\mf{u}(\kappa)$.

In (ii), we compute $\mf{p}(\kappa) = \mf{t}(\kappa) = \kappa^+$ by observing that the $\kappa^+$-distributive quotient of the Mitchell forcing adds a tower of size $\kappa^+$.

Finally, as a corollary of the construction, we observe that items (i) and (ii) hold also for the traditional invariants on $\kappa = \omega$, using Mitchell forcing up to a weakly compact cardinal; in this case we also obtain the disjoint stationary sequence property $\DSS(\omega_2)$, which implies the negation of the approachability property $\neg \AP(\omega_2)$. 
\end{abstract}

\keywords{compactness principles; tree property; weak Kurepa hypothesis; approachability property; generalized cardinal invariants; ultrafilter number; tower number}
	\subjclass[2010]{03E55, 03E35, 03E17, 03E05}
	\maketitle

\section{Introduction}\label{sec:intro}

There has been an extensive research recently in the area of compactness principles at successor cardinals, and one of the questions is to what extent, if at all, these principles restrict the continuum function in the proximity of these cardinals. See for instance \cite{CK:ind}, \cite{arithmetic_paper}, or \cite{S:tp} for some examples. Extending this question, we can ask whether there are some restrictions for other cardinal invariants besides the continuum function. This has been done for instance in \cite{LH:Silver} and \cite{HS:u}, where the focus is on the cardinal invariants on singular strong limit cardinals. 

In this article we investigate cardinal invariants on an inaccessible $\kappa$ with the focus on the ultrafilter number $\mf{u}(\kappa)$ and the tower number $\mf{t}(\kappa)$. See Section \ref{sec:prelim} for the definitions of compactness principles $\SR(\kappa^{++})$ (stationary reflection), $\TP(\kappa^{++})$ (the tree property), $\neg \wKH(\kappa^{+})$ (the negation of the weak Kurepa Hypothesis), and $\DSS(\kappa^{++})$ (the disjoint stationary sequence property) we use in this article. We show in Theorems \ref{th:P_1} and \ref{th:P_2} that for any reasonable choice of cardinals $\uu$ and $2^\kappa$, $\kappa^+ \le \mf{t}(\kappa) = \mf{u}(\kappa) < 2^\kappa$ and $\kappa^+ = \mf{t}(\kappa) < \mf{u}(\kappa) < 2^\kappa$ are both consistent with $\SR(\kappa^{++})$, and $\kappa^+ = \mf{t}(\kappa) < \mf{u}(\kappa) < 2^\kappa$ is consistent with $\TP(\kappa^{++})$ and $\neg \wKH(\kappa^+)$. The consistency of $\kappa^+ < \mf{t}(\kappa) = \mf{u}(\kappa) < 2^\kappa$ with $\TP(\kappa^{++})$ and/or $\neg \wKH(\kappa^+)$ seems to be open at the moment (see Section \ref{sec:open} with open questions). In addition to $\uu$ and $\mf{t}(\kappa)$ we also compute the values of cardinal invariants $\mf{b}(\kappa), \mf{d}(\kappa), \mf{s}(\kappa), \mf{r}(\kappa)$ and $\mf{a}(\kappa)$, and also the invariants of the meager ideal $\mathcal M_\kappa$. See \cite{Bat:gen} for details regarding cardinal invariants at a regular uncountable $\kappa>\omega$. In Theorem \ref{th:P_3} we observe that our result also applies to the traditional cardinal invariants at $\omega$, where we in addition obtain the principle $\DSS(\omega_2)$, and hence $\neg \AP(\omega_2)$ (the negation of the approachability property). 

We use indestructibility results available for the above-mentioned compactness principles to argue that they hold in the final models: If $\kappa$ is regular with $\kappa^{<\kappa} = \kappa$ and $\lambda> \kappa$ is a large cardinal, it is known that suitable variants of Mitchell forcing $\M(\kappa,\lambda)$ force many compactness principles at $\lambda$ with $\lambda = \kappa^{++}$ in $V[\M(\kappa,\lambda)]$. Many of these compactness principles can be preserved in a further forcing extension via some $\P \in V[\M(\kappa,\lambda)]$, provided $\P$ has certain nice properties, such as being $\kappa^+$-cc. If all is set up correctly, the model $V[\M(\kappa,\lambda)*\dot{\P}]$ can satisfy both the compactness principles at $\kappa^{++}$ and some additional properties ensured by $\dot{\P}$.

In the present article, we show how to apply this strategy with the $\kappa^+$-Knaster and $\kappa$-directed closed forcing notion denoted $\P_\delta \da p_{\UU}$, introduced in Brooke-Taylor, Fischer, Friedman and Montoya \cite{F:u}. $\P_\delta \da p_{\UU}$ is a simplified version of a forcing from D{\v z}amonja and Shelah  \cite{DS:graph} which yields a model where $\kappa$ is inaccessible and $\uu < 2^\kappa$ (among other things). In both articles, $\kappa$ is assumed to be a supercompact cardinal and can retain its supercompactness in the final model. The ordinal $\delta$ in $\P_\delta \da p_{\UU}$ is the length of the iteration and has the prescribed cofinality which is equal to $\uu$ in the generic extension.

We show that it is possible to use $\P_\delta \da p_{\UU}$ in two different ways, obtaining two different forcing notions $\P_1$ and $\P_2$, and in effect two different patterns of cardinal invariants:

\begin{equation}
\P_1 := \M(\kappa,\lambda) * \dot{\P}_\delta \da p_{\UU},
\end{equation}
where $\dot{\P}_\delta \da p_{\UU}$ is defined in $V[\M(\kappa,\lambda)]$, and

\begin{equation}
\P_2 := \Add(\kappa,\lambda) * (\dot{\P}_\delta \da p_{\UU} \x \dot{R}),
\end{equation}
where we use that $\M(\kappa,\lambda)$ is equivalent to $\Add(\kappa,\lambda)*\dot{R}$ for some $\kappa^+$-distributive quotient $\dot{R}$,  and define $\dot{\P}_\delta \da p_{\UU}$ in the smaller model $V[\Add(\kappa,\lambda)]$.\footnote{$\Add(\kappa,\lambda)$ is the Cohen forcing for adding $\lambda$ many subsets of $\kappa$. For concreteness, we identify conditions in this forcing with partial function of size $<\kappa$ from $\lambda \x \kappa$ to $2$. If $I \sub \lambda$, $\Add(\kappa,I)$ has the natural meaning (the domain of functions is a subset of $I \x \kappa$).}

It is straightforward to observe that $\P_1$ forces stationary reflection at $\kappa^{++}$ by invoking an indestructibility result reviewed in Fact \ref{f:sr}, together with the pattern of  cardinal invariants computed in \cite{F:u} (see Theorem \ref{th:P_1}).

The forcing $\P_2$ forces in addition the tree property at $\kappa^{++}$ by Fact \ref{f:tp} and the negation of the weak Kurepa Hypothesis at $\kappa^+$ by Theorem \ref{th:wKH}, and in contrast to $\P_1$, it forces $\mf{p}(\kappa) = \mf{t}(\kappa) = \kappa^+$ due to the fact that $\dot{R}$ introduces a tower of size $\kappa^+$. See Theorem \ref{th:P_2} for details.

The article is structured as follows: in Section \ref{sec:prelim} we review basic notions and facts related to generalized cardinal invariants and compactness principles and prove an indestructibility Theorem \ref{th:wKH} for the negation of the weak Kurepa Hypothesis. In Section \ref{sec:review} we briefly review the forcing construction from \cite{F:u} in order to make the article (relatively) self-contained, and also to clarify some unclear points from \cite{F:u}. In Section \ref{sec:main} we prove main Theorems \ref{th:P_1} and \ref{th:P_2} for an inaccessible $\kappa$. In Section \ref{sec:open} we state some open questions and formulate a corollary of our construction which gives the consistency of an analogous configuration at $\omega$ using a forcing from \cite{BS:smallu}: $\omega_1 = \mf{t} < \mf{u} < 2^\omega$ plus  $\SR(\omega_2), \TP(\omega_2), \neg \wKH(\omega_1)$. In contrast to the situation for an inaccessible cardinal $\kappa$ -- where we do not know whether $\DSS(\kappa^{++})$ holds (see Remarks \ref{rm:new1} and \ref{rm:new2}), we are able to obtain $\DSS(\omega_2)$; see Theorem \ref{th:P_3} with forcing $\P_3$.

\brm \label{rm:new1}
There are other compactness principles for which a similar result can be attempted (see Question 4 in Section \ref{sec:open}). Let us provide a few comments on the negation of the approachability property $\neg \AP$ and the disjoint stationary sequence property $\DSS$ (see Definitions \ref{def:AP} and \ref{def:dss}).  Since $\DSS(\kappa^{++})$ is easy to preserve by  Theorem \ref{th:dss} and it implies $\neg \AP(\kappa^{++})$ by \cite[Corollary 3.7.]{KR:DSS}, it is natural to attempt to show the consistency of $\neg \AP(\kappa^{++})$ with cardinal invariants in Theorems \ref{th:P_1} and \ref{th:P_2} by means of $\DSS(\kappa^{++})$. However, it is not known whether $\DSS(\kappa^{++})$ holds in generic extensions by Mitchell forcings which force $\neg \AP(\kappa^{++})$ and do not add new subsets of $\omega$ (Mitchell forcing from Definition \ref{def:M} in this article is of this type hence our remark applies to it).  Note that the situation at $\kappa = \omega$ is different since it is known that Mitchell forcing $\M(\omega,\lambda)$ forces $\DSS(\omega_2)$ for a sufficiently large $\lambda$ (see \cite{KR:DSS}). See Theorem \ref{th:P_3} in this article. See also Remark \ref{rm:new2} for some more comments regarding $\DSS(\kappa^{++})$.
\erm

\section{Preliminaries}\label{sec:prelim}

\subsection{Generalized cardinal invariants}

We will review the definitions of the ultrafilter and tower numbers in this subsection because they are central to this article. Please consult \cite{F:u}, \cite{VFDMJSDS}, and \cite{Bat:gen} for other cardinal invariants which will appear in this article.

\begin{definition}\label{def:u}
Suppose $\kappa$ is an infinite cardinal. The ultrafilter number, $\uu$, is defined as follows:
$$\uu = \mx{min}\set{\mathcal B}{\mathcal B \mbox{ is a base of a uniform ultrafilter on }\kappa}.
$$
\end{definition}

Note for further reference that the ultrafilter does not need to be $\kappa$-complete even if $\kappa$ is measurable. Also note that $\kappa$ can be singular: see \cite{HS:u} for some results discussing $\uu$ and compactness at $\kappa^{++}$  for a singular $\kappa$.

Let now review the concept of a tower and of the tower number. For this definition we will assume $\kappa$ is regular. For $A, B \in [\kappa]^\kappa$, we write $T_\alpha \sub^* T_\beta$ if $|T_\alpha \setminus T_\beta| < \kappa$ (the \emph{almost-inclusion relation}).

\begin{definition}\label{def:tower}
Suppose $\kappa$ is a regular cardinal. We say that $\mathcal T \sub [\kappa]^\kappa$ is a \emph{tower} if $\mathcal T$ is reversely well-ordered by $\sub^*$,\footnote{$\mathcal T$ can be enumerated as an $\sub^*$-decreasing sequence $\seq{T_\alpha}{\alpha<\gamma}$ of elements in $[\kappa]^\kappa$, for some ordinal $\gamma$.} for every $X \sub \mathcal T$ of size $<\kappa$, $|\bigcap X| = \kappa$ (we say that $\mathcal T$ satisfies the \emph{strong intersection property, SIP}), and $\mathcal T$ has no pseudo-intersection, i.e.\ there is no $A \in [\kappa]^\kappa$ such that $A \sub^* T$ for all $T \in \mathcal T$.
\end{definition}

\begin{definition}\label{def:t}
Suppose $\kappa$ is a regular cardinal. The tower number, $\mf{t}(\kappa)$, is defined as follows:
$$\mf{t}(\kappa) = \mx{min}\set{\mathcal T}{\mathcal T \mbox{ is a tower on }\kappa}.
$$
\end{definition}

By a result of Malliaris and Shelah,  the tower number is the same as the pseudo-intersection number\footnote{Minimal size of a family $\mathcal F \sub [\kappa]^{\kappa}$ which has SIP but has no pseudo-intersection.} $\mf{p}(\kappa)$ if $\kappa = \omega$. For uncountable cardinals $\kappa$, it always holds $\mf{p}(\kappa) \le \mf{t}(\kappa)$, but the identity is open (see \cite{VFDMJSDS} for more details on this point). 

It is worth pointing out the following inequalities which are provable in $\ZFC$ (where $\mf{b}(\kappa)$ is the bounding number). 
$$\kappa^+ \le \mf{p}(\kappa) \le \mf{t}(\kappa) \le \mf{b}(\kappa) \le \uu.$$
See \cite{F:u} for proofs and more details. See also (\ref{eq1})--(\ref{eq3}) in this article for more inequalities for other cardinal invariants.

\subsection{Compactness principles}

Suppose $\lambda$ is regular and  $(T,<_T)$ is a tree of height $\lambda$. We say that $b \sub T$ is a \emph{cofinal branch} in $(T,<_T)$ if $(b,<_T)$ is linearly ordered and $b$ meets every level of $T$. We say that $(T,<_T)$ is a \emph{$\lambda$-tree} if it has height $\lambda$ and all levels of $T$ have size $<\lambda$. We usually write just $T$ for $(T,<_T)$ if there is no danger of confusion.

\begin{definition}
Suppose $\lambda$ is a regular cardinal. We say that the \emph{tree property} holds at $\lambda$, and we write $\TP(\lambda)$, if every $\lambda$-tree has a cofinal branch.
\end{definition}

\begin{definition}
Suppose $\lambda$ is a regular cardinal. We say that the \emph{negation of the weak Kurepa Hypothesis} holds at $\lambda$, and we write $\neg \wKH(\lambda)$, if there are no trees of height and size $\lambda$ which have at least $\lambda^+$-many cofinal branches.
\end{definition}

\brm
In this article, we will consider these principles at successor cardinals:  $\TP(\kappa^{++})$ and $\neg \wKH(\kappa^+)$ for a regular cardinal $\kappa$. Even though $\neg \wKH(\kappa^+)$ refers to trees of size $\kappa^+$ and $\TP(\kappa^{++})$ to $\kappa^{++}$-trees, they typically hold together (for instance in Mitchell-type forcings in this article which collapse a large cardinal $\lambda$ to become $\kappa^{++}$). The reason is that the large cardinal nature of the principle $\neg \wKH(\kappa^+)$ is related to the number of cofinal branches.
\erm

\begin{definition}
Suppose $\kappa$ is an infinite cardinal. We say that \emph{stationary reflection} holds at $\kappa^{++}$, and we write $\SR(\kappa^{++})$, if every stationary subset $S \sub \kappa^{++} \cap \cof(< \kappa^+)$ reflects at a point of cofinality $\kappa^+$; i.e.\ there is $\alpha < \kappa^{++}$ of cofinality $\kappa^+$ such that $\alpha \cap S$ is stationary in $\alpha$.
\end{definition}

Note that the set $\set{\alpha<\kappa^{++}}{\cf(\alpha) = \kappa^+}$ is always non-reflecting, hence the restrictions to cofinalities $<\kappa^+$ in $\SR(\kappa^{++})$ is essential. If $\lambda$ is inaccessible or a successor of a singlar cardinal, all stationary sets may reflect, but we will not consider such $\lambda$'s in this article.

As we discussed in Remark \ref{rm:new1}, the following two principles do not appear in Theorems \ref{th:P_1} and \ref{th:P_2}, but they appear in Theorem \ref{th:P_3}.

For cardinals $\kappa\le \lambda$, we denote by $\mathcal{P}_\kappa(\lambda)$ the set of all subsets of $\lambda$ of size $<\kappa$. The following property $\DSS$ was introduced in \cite{KR:DSS}:

\begin{definition}\label{def:dss}
Suppose $\kappa$ is an infinite cardinal. We say that $\kappa^{++}$ has the \emph{disjoint stationary sequence property}, $\DSS(\kappa^{++})$, if there are a stationary set $S \sub \kappa^{++} \cap \cof(\kappa^+)$ and a sequence $\seq{s_\alpha}{\alpha \in S}$ such that:
\begin{enumerate}[(i)]
\item For all $\alpha \in S$, $s_\alpha$ is a stationary subset of $\mathcal{P}_{\kappa^+}(\alpha)$;
\item For all $\alpha < \beta$ in $S$, $s_\alpha \cap s_\beta = \0$.
\end{enumerate}
\end{definition}

For a regular cardinal $\lambda$ and sequence $\bar{a}=\seq{a_\alpha}{\alpha<\lambda}$ of bounded subsets of $\lambda$, we say that an ordinal $\gamma<\lambda$ is approachable with respect to $\bar{a}$ if there is an unbounded subset $A\sub\gamma$ of order type $\cf(\gamma)$ and for  all $\beta<\gamma$ there is $\alpha<\gamma$ such that $A\cap\beta=a_\alpha$.

Let us define the \emph{ideal $I[\lambda]$ of approachable subsets of $\lambda$}:

\begin{definition}
$S\in I[\lambda]$ if and only if there are a sequence $\bar{a}= \seq{a_\alpha}{\alpha<\lambda}$ of bounded subsets of $\lambda$ and a club $C\sub \lambda$ such that every $\gamma\in S\cap C$ is approachable with respect to $\bar{a}$.
\end{definition}

\begin{definition}\label{def:AP}
We say that \emph{the approachability property holds} at $\lambda$ if $\lambda \in I[\lambda]$ (or equivalently, there is a club subset of $\lambda$ in $I[\lambda]$), and we write $\AP(\lambda)$.
\end{definition}

In \cite[Corollary 3.7.]{KR:DSS} Krueger showed that $\DSS(\kappa^{++})$ implies $\neg \AP(\kappa^{++})$.

\brm \label{rm:new2}
At the moment, the only known method for obtaining the principle $\DSS(\kappa^{++})$ for an uncountable $\kappa$ is to use a variant of the Mitchell forcing which adds $\kappa^{++}$ many new subsets of $\omega$ (see \cite[Theorem 9.1]{KR:DSS}). This modification is not suitable for us because we need to preserve $\kappa^{<\kappa} = \kappa$ in Theorems \ref{th:P_1} and \ref{th:P_2}.  The difficulty of the problem comes in part from fact that it is related to another open question dealing with possible generalizations of Gitik's theorem that adding a real makes $({\mathcal P}_\kappa\lambda)^V$ (for relevant $\kappa,\lambda$) co-stationary in generic extensions which contain the real (see \cite{Gitik:real} and comments in \cite[Section 7]{KR:DSS}).
\erm

\subsection{Indestructibility of some compactness principles}

A strong form of indestructibility is known for stationary reflection. It works over any model and requires just an appropriate chain condition (we formulate it for a double successor to fit our present purposes):

\begin{Fact}[Indestructibility of stationary reflection, \cite{HS:u}] \label{f:sr}
Suppose $\SR(\kappa^{++})$ holds and $\Q$ is $\kappa^+$-cc and preserves $\kappa$. Then $\Q$ forces $\SR(\kappa^{++})$.
\end{Fact}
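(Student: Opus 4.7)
The plan is to argue by contradiction. Suppose that in $V$ some condition $p \in \Q$ and $\Q$-name $\dot{S}$ satisfy $p \Vdash$ ``$\dot{S}$ is a stationary subset of $\kappa^{++} \cap \cof(<\kappa^+)$ that does not reflect''. Note that $\kappa^+$-cc together with preservation of $\kappa$ implies that $\kappa^+$ and $\kappa^{++}$ remain cardinals and that cofinality $\ge \kappa^+$ is absolute in both directions between $V$ and $V[G]$. The first step is to pull $\dot{S}$ back to $V$ via the set of ``potential members''
\[
T = \{\alpha < \kappa^{++} : \exists\, q \le p,\ q \Vdash \check{\alpha} \in \dot{S}\},
\]
and to check that $T$ is stationary in $V$: any $V$-club $C \subseteq \kappa^{++}$ is still a club in $V[G]$ whenever $p \in G$, hence meets $\dot{S}^G$, and any such meeting point lies in $T$. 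The two-sided absoluteness of cofinality $\ge \kappa^+$ also gives $T \subseteq \cof^V(<\kappa^+)$, for if some $\alpha \in T$ had $\cf^V(\alpha) \ge \kappa^+$ the same would hold in $V[G]$, contradicting $\dot{S} \subseteq \cof(<\kappa^+)$.

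Applying $\SR(\kappa^{++})$ in $V$ to $T$ produces $\alpha^* < \kappa^{++}$ with $\cf^V(\alpha^*) = \kappa^+$ and $T \cap \alpha^*$ stationary in $\alpha^*$. Since $\cf^{V[G]}(\alpha^*) = \kappa^+$ as well, the non-reflection assumption yields $p \Vdash$ ``$\dot{S} \cap \alpha^*$ is non-stationary'', and by the maximum principle one can fix a $\Q$-name $\dot{D}$ with $p \Vdash$ ``$\dot{D}$ is a club in $\alpha^*$ disjoint from $\dot{S}$''.

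The crucial use of the chain condition is the standard lemma that such a $\dot{D}$ contains a ground-model club. Because $\kappa < \kappa^+ = \cf^V(\alpha^*)$, the function
\[
f(\beta) = \sup\{\gamma < \alpha^* : \exists\, r \le p,\ r \Vdash \gamma = \min(\dot{D} \setminus \beta)\}
\]
stays below $\alpha^*$ (the conditions deciding $\min(\dot{D} \setminus \beta)$ form an antichain of size $\le \kappa$), and its set of closure points $D^* \subseteq \alpha^*$ is a $V$-club with $p \Vdash D^* \subseteq \dot{D}$, hence $p \Vdash D^* \cap \dot{S} = \emptyset$. Since $T \cap \alpha^*$ is stationary in $\alpha^*$ and $D^*$ is a club in $\alpha^*$, I can pick $\beta \in T \cap D^*$; by definition of $T$ some $q \le p$ forces $\beta \in \dot{S}$, while $p$ (and therefore $q$) forces $\beta \notin \dot{S}$ via $\beta \in D^*$, the desired contradiction.

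The main obstacle I anticipate is coordinating the $V$- and $V[G]$-sides at $\alpha^*$. The argument relies on the absoluteness $\cf^V(\alpha^*) = \kappa^+ \iff \cf^{V[G]}(\alpha^*) = \kappa^+$ to apply $V$-reflection and $V[G]$-non-reflection at a common ordinal, and on the inequality $\kappa < \cf^V(\alpha^*)$ to bound the antichains in the ground-model-club lemma; without either piece the construction of $D^*$ would fail to be cofinal in $\alpha^*$, and the contradiction would collapse.
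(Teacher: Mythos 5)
Your proof is correct, and it is the standard indestructibility argument that the paper simply cites from \cite{HS:u} without reproducing: pull the name $\dot S$ back to the ground-model stationary set $T$ of potential members, reflect $T$ in $V$ at some $\alpha^*$ of cofinality $\kappa^+$ (using the two-way absoluteness of cofinality $\geq\kappa^+$ under a $\kappa^+$-cc, $\kappa$-preserving forcing), and use the chain condition to thin the forced club witnessing non-reflection at $\alpha^*$ to a ground-model club $D^*$, giving the contradiction at any $\beta\in T\cap D^*$. The only cosmetic imprecision is the phrase that the conditions deciding $\min(\dot D\setminus\beta)$ ``form an antichain''; what is meant (and what the argument needs) is that the set of possible values has size $\leq\kappa$ because distinct values are forced by pairwise incompatible conditions.
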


Known results for preservation of the tree property by forcings with an appropriate chain condition are less general and are usually formulated for variants of Mitchell-type forcings.\footnote{In a recent article \cite{HLS:gmp}, Honzik, Lambie-Hanson and Stejskalova proved an indestructibility result which holds for all models of a certain extension of $\ZFC$: they showed that Cohen forcing for adding any number of Cohen subsets of $\kappa$, with $\kappa = \kappa^{<\kappa}$, preserves $\TP(\kappa^{++})$ over any model of the guessing model principle $\GMP_{\kappa^{++}}$. In particular $\TP(\omega_2)$ is preserved by Cohen forcing at $\omega$ over any model of $\PFA$. }  Let us define a specific version of Mitchell forcing which we will use in the present article. The set $\mathcal A \sub \lambda$ in (\ref{A}) is a parameter of the construction and determines at which stages collapsing occurs.\footnote{Any choice of $\mathcal A$ which ensures conditions (\ref{eq:M1}), (\ref{eq:M2}) and is cofinal in $\lambda$ yields $\TP(\kappa^{++})$ and $\neg \wKH(\kappa^+)$ in $V[\M^{\mathcal A}(\kappa,\lambda)]$. For $\neg \AP(\kappa^{++})$, $\mathcal A$ must be chosen more carefully to ensure the approximation property of the quotient. Definition (\ref{A}) ensures $\neg \AP(\kappa^{++})$; see \cite{8fold} for more details.} For the purposes of this article we will use the following:
\beq \label{A} \mathcal A = \set{\alpha < \lambda}{\alpha \mbox{ is inaccessible, but not a limit of inaccessibles}}.\eeq

\begin{definition}[Mitchell forcing]\label{def:M}
Suppose $\kappa = \kappa^{<\kappa}$,  $\lambda> \kappa$ is an inaccessible limit of inaccessibles and $\mathcal A$ is as in (\ref{A}). Conditions in $\M^\mathcal A(\kappa,\lambda)$ are pairs $(p^0,p^1)$ such that $p^0$ is a condition in the Cohen forcing for adding $\lambda$ many subsets of $\kappa$, i.e.\ $p^0 \in \Add(\kappa,\lambda)$, and $p^1$ is a function with domain $\dom{p^1} \sub \mathcal A$ of size at most $\kappa$. For $\alpha$ in the domain of $p^1$, $p^1(\alpha)$ is an $\Add(\kappa,\alpha)$-name and \begin{equation} 1_{\Add(\kappa,\alpha)} \Vdash p^1(\alpha) \in \Add(\kappa^+,1)^{V[\Add(\kappa,\alpha)]}.\end{equation} The ordering is defined as follows: $(p^0,p^1) \le (q^0,q^1)$ iff $p^0 \le q^0$ in $\Add(\kappa,\lambda)$ and the domain of $p^1$ extends the domain of $q^1$, and for every $\alpha \in \dom{q^1}$, \begin{equation} \restr{p^0}{\alpha} \Vdash p^1(\alpha) \le q^1(\alpha),\end{equation} where $\restr{p^0}{\alpha}$ is the natural restriction of $p^0$ to $\alpha \x \kappa$. We will abuse notation and write $\M(\kappa,\lambda)$ instead of $\M^{\mathcal A}(\kappa,\lambda)$ if there is no danger of confusion.
\end{definition}

$\M(\kappa,\lambda)$ collapses cardinals in the open interval $(\kappa^+,\lambda)$, and forces $\lambda = \kappa^{++}$ and $2^\kappa = \lambda$. It also forces various compactness principles depending on the largeness of $\lambda$. We will use the following product and quotient analysis due to Abraham \cite{ABR:tree}. 

For every inaccessible limit of inaccessibles $\alpha < \lambda$ or $\alpha = 0$, there is a projection $\pi_\alpha$ in $V[\M(\kappa,\alpha)]$ as follows:
\begin{equation}\label{eq:M1}
\pi_\alpha: \Add(\kappa,[\alpha,\lambda)) \x R^1_\alpha \to_{\mx{onto}}\M(\kappa,[\alpha,\lambda)),
\end{equation}
where $\Add(\kappa,[\alpha,\lambda))$ is the $\kappa$-Cohen forcing on coordinates $[\alpha,\lambda)$, $\M(\kappa,[\alpha,\lambda))$ is the Mitchell forcing defined at the interval $[\alpha,\lambda)$ and $R^1_\alpha$ is a term forcing which is $\kappa^+$-closed in $V[\M(\kappa,\alpha)]$. We write $R^1$ for $R^1_0$.

For every inaccessible limit of inaccessibles $\alpha < \lambda$ or $\alpha = 0$, the forcing $\M(\kappa, \lambda)$ can be written as
\begin{equation}\label{eq:M2}
\M(\kappa,\lambda) \equiv \M(\kappa,\alpha) * \dot{\M}(\kappa,[\alpha,\lambda)) \equiv \M(\kappa,\alpha) * (\Add(\kappa, [\alpha,\lambda)) * \dot{R}_\alpha),
\end{equation}
where $\dot{R}_\alpha$ is forced to be $\kappa^+$-distributive by $\M(\kappa,\alpha) * \Add(\kappa,[\alpha,\lambda))$ and $\equiv$ denotes forcing equivalence. We write $\dot{R}$ for $\dot{R}_0$.

\begin{Fact}\label{f:M}
Mitchell forcing $\M^{\mathcal A}(\kappa,\lambda) = \M(\kappa,\lambda)$ with $\mathcal A$ as in (\ref{A}) forces the principles $\SR(\kappa^{++})$, $\TP(\kappa^{++})$, $\neg \wKH(\kappa^+)$ and $\neg \AP(\kappa^{++})$ whenever $\kappa^{<\kappa} = \kappa$ and $\lambda>\kappa$ is weakly compact. If $\kappa = \omega$, then $\M(\omega,\lambda)$ in addition forces $\DSS(\omega_2)$ as well.
\end{Fact}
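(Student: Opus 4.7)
The plan is to derive each of the five principles from the product/quotient decompositions displayed in (\ref{eq:M1}) and (\ref{eq:M2}) combined with the weak compactness (hence, in particular, inaccessibility and Mahloness) of $\lambda$. All five assertions are proved in the cited literature, and the goal is to organise the proof as a coordinated appeal to known arguments rather than to redo them. First I would observe that $\M(\kappa,\lambda)$ preserves $\kappa^+$ (via the $\kappa^+$-closed factor $R^1$ in (\ref{eq:M1})) and has the $\lambda$-cc (via the $\kappa^+$-Knaster factor $\Add(\kappa,\lambda)$ together with the same projection), so that $\lambda$ becomes $\kappa^{++}$ and $2^\kappa = \lambda$ in $V[\M(\kappa,\lambda)]$; this fixes the cardinal arithmetic at the target successor.

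For $\TP(\kappa^{++})$ and $\neg\wKH(\kappa^+)$ I would follow Abraham's strategy. Given a $\lambda$-tree $T$ with a nice $\M(\kappa,\lambda)$-name, use the fact that $\lambda$ is Mahlo and that conditions have small support to reflect the relevant information to an inaccessible stage $\alpha \in \mathcal{A}$. Weak compactness of $\lambda$, applied to an elementary embedding $j\colon V \to M$ with critical point $\lambda$, produces a cofinal branch of $T$ in a suitable intermediate extension. The key point is the branch-preservation lemma for the quotient $\dot{\M}(\kappa,[\alpha,\lambda))$: factoring through the projection (\ref{eq:M1}) as $\Add(\kappa,[\alpha,\lambda)) \times R^1_\alpha$, the $\kappa^+$-Knaster factor adds no cofinal branch to any tree of height and size $\leq \kappa^+$ (standard Kunen-style chain argument), and the $\kappa^+$-closed factor adds no cofinal branch either (a new branch would yield an incompatible pair in a fusion). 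Thus the branch produced by the embedding is already in $V[\M(\kappa,\lambda)]$, giving $\TP(\kappa^{++})$; the same branch-counting argument bounds the cofinal branches through any tree of size $\kappa^+$ by $\kappa^+$, giving $\neg\wKH(\kappa^+)$.

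For $\SR(\kappa^{++})$ I would take a stationary $S \subseteq \lambda \cap \cof(<\kappa^+)$ in $V[\M(\kappa,\lambda)]$, find a master condition using the $\kappa^+$-closure of $R^1$ to lift $j$ through $\M(\kappa,\lambda)$, and read off that $\sup j[\lambda] < j(\lambda)$ has cofinality $\kappa^+$ in $M[j(\M(\kappa,\lambda))]$ and witnesses reflection of $j(S)$; by elementarity this reflects back into $\lambda$. For $\neg\AP(\kappa^{++})$ the argument uses the specific choice of $\mathcal{A}$ in (\ref{A}) to guarantee, as in \cite{8fold}, that the quotient $\dot R_\alpha$ of (\ref{eq:M2}) enjoys the $\kappa^+$-approximation property over $V[\M(\kappa,\alpha) * \dot{\Add}(\kappa,[\alpha,\lambda))]$; a standard computation then shows $\kappa^{++} \notin I[\kappa^{++}]$ in $V[\M(\kappa,\lambda)]$. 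Finally, the additional clause $\DSS(\omega_2)$ for $\kappa = \omega$ is precisely Theorem 9.1 of \cite{KR:DSS}, which constructs the disjoint stationary sequence directly from the analysis of $\M(\omega,\lambda)$.

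The main obstacle is the branch-preservation step underlying $\TP(\kappa^{++})$ and $\neg\wKH(\kappa^+)$: one must verify that the projection (\ref{eq:M1}) really lets one reduce the quotient to a product of a Knaster and a closed factor at every sufficiently closed $\alpha$, since without this reduction the $\kappa^+$-cc of the quotient alone would not rule out a Kurepa-type branch explosion. Everything else is a clean application of the weak compactness of $\lambda$ together with indestructibility arguments already isolated in \cite{ABR:tree}, \cite{KR:DSS} and \cite{8fold}.
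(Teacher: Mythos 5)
Your proposal is correct and follows essentially the same route as the paper: the paper's own proof of this Fact is simply a citation of \cite{8fold} for $\SR(\kappa^{++})$, $\TP(\kappa^{++})$ and $\neg\AP(\kappa^{++})$, of \cite{KR:DSS} for $\DSS(\omega_2)$, and a remark that $\neg\wKH(\kappa^+)$ is a variant of the tree-property argument (carried out in detail in Theorem \ref{th:wKH}), and your sketch just unpacks those standard arguments via the decompositions (\ref{eq:M1})--(\ref{eq:M2}). The only caveats are sketch-level: the closed factor's branch preservation needs $2^\kappa=\kappa^{++}$ exceeding the level sizes (Unger/Silver), and the $\SR$ step needs the quotient to preserve stationarity of $S$ (again via (\ref{eq:M1})) rather than a master condition per se, but neither affects the correctness of the appeal to the cited results.
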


\begin{proof}
See \cite{8fold} for the proofs of $\SR(\kappa^{++})$, $\TP(\kappa^{++})$ and $\neg \AP(\kappa^{++})$. The argument for $\neg \wKH(\kappa^+)$ is a variant of the tree property argument (see Theorem \ref{th:wKH} in the present article for more details). See \cite{KR:DSS} for the proof of $\DSS(\omega_2)$ and more information on the principle $\DSS$ in general.
\end{proof}

\brm
We have defined $\M(\kappa,\lambda)$ for $\lambda$ which is an inaccessible limit of inaccessible cardinals to ensure that $\mathcal A$ is cofinal in $\lambda$. This is sufficient for lifting-type arguments because $\lambda$ is always an inaccessible limit of inaccessibles in a target model of a weakly compact embedding with critical point $\lambda$, at which cardinal (\ref{eq:M1}) and (\ref{eq:M2}) are applied (in fact, the analysis in (\ref{eq:M1}) and (\ref{eq:M2}) holds at every $\alpha < \lambda$ but this is not relevant for us so we have not mentioned it above). With a different choice of the parameter $\mathcal A$, $\lambda$ may be just an inaccessible cardinal. Finally note that for $\alpha \in \mathcal A$, $\M(\kappa,\lambda)$ can be written as \beq \label{embed} \M(\kappa,\alpha) * (\dot{\Add}(\kappa^+,1) \x \Add(\kappa,1)) * \dot{\M}(\kappa,[\alpha+1,\lambda)),\eeq and for $\alpha \not \in \mathcal A$ as \beq\M(\kappa,\alpha) *  \Add(\kappa,[\alpha,\alpha')) * (\dot{\Add}(\kappa^+,1) \x \Add(\kappa,1)) *
 \dot{\M}(\kappa,[\alpha'+1,\lambda)),\eeq where $\alpha'>\alpha$ is the least element of $\mathcal A$ above $\alpha$. Thus at an inaccessible limit of inaccessibles $\alpha <\lambda$, the tail of the forcing after $\M(\kappa,\alpha)$ has the $\kappa^+$-approximation property (see \cite{8fold}), which is essential for showing $\neg \AP(\kappa^{++})$ in $V[\M(\kappa,\lambda)]$.
\erm 

The following is an indestructibility result for the tree property we will use in this article:

\begin{Fact}[Indestructibility of the tree property, \cite{HS:ind}]\label{f:tp}
Suppose $\kappa = \kappa^{<\kappa}$ and $\lambda > \kappa$ is weakly compact. Suppose $\Add(\kappa,\lambda)$ forces that $\dot{\Q}$ is a forcing notion which is $\kappa^+$-cc and preserves $\kappa$. Then $\M(\kappa,\lambda) * \dot{\Q}$ forces $\TP(\kappa^{++})$.
\end{Fact}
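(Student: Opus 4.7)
The plan is to carry out the standard weakly compact lifting argument for Mitchell forcing, with the $\kappa^+$-cc forcing $\dot{\Q}$ treated as an additional factor on top. Fix $G*H$ generic for $\M(\kappa,\lambda)*\dot{\Q}$ over $V$, a $\kappa^{++}$-tree $T\in V[G*H]$, and an $\M(\kappa,\lambda)*\dot{\Q}$-name $\dot T$ for it. By weak compactness of $\lambda$, pick a sufficiently large $\theta$, a transitive $M\prec H(\theta)^V$ of size $\lambda$ with $M^{<\lambda}\sub M$ containing $\dot T$ and all relevant parameters, together with an elementary embedding $j:M\to N$ of critical point $\lambda$, with $N$ transitive and $N^{<\lambda}\sub N$.

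First I would lift $j$ to $j^{**}:M[G*H]\to N[j(G*H)]$. Applying the factorisation~(\ref{eq:M2}) in $N$ at $\lambda$, write $j(\M(\kappa,\lambda))\equiv \M(\kappa,\lambda)*\dot{\M}^{\mathrm{tail}}$, where $\dot{\M}^{\mathrm{tail}}\equiv\Add(\kappa,[\lambda,j(\lambda)))*\dot{R}^{\mathrm{tail}}$ and $\dot{R}^{\mathrm{tail}}$ is forced to be $\kappa^+$-closed. Build a generic $G^{\mathrm{tail}}$ for $\dot{\M}^{\mathrm{tail}}$ over $V[G*H]$ in the standard way: the Cohen piece is unproblematic, while the closed piece admits a diagonal master-condition construction using the $\lambda$-closure of $N$. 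This extends $j$ to a lift defined on $M[G]$. Then lift through $\dot{\Q}$: since $j(\dot{\Q})$ is forced to be $\kappa^+$-cc in $N[j(G)]$ and has size $\lambda$ from $N$'s perspective, enumerate its maximal antichains from $N[j(G)]$ and diagonalise through conditions below $j[H]$ to obtain a generic $H'$ for $j(\dot{\Q})/j[H]$ extending $j[H]$, completing the lift.

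In the target model $N[j(G*H)]$, any node $x\in j^{**}(T)$ at level $\lambda$ yields a cofinal branch $b=\{y\in T : y<_{j^{**}(T)}x\}$ of $T$.

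The main obstacle is to show $b\in V[G*H]$. Analyse the quotient forcing taking $V[G*H]$ to $V[j(G*H)]$: by the decomposition above, it is an iteration of the $\kappa^+$-Knaster $\Add(\kappa,[\lambda,j(\lambda)))$ (using $\kappa^{<\kappa}=\kappa$), the $\kappa^+$-closed $\dot{R}^{\mathrm{tail}}$, and the $\kappa^+$-cc $j(\dot{\Q})/j[H]$. Either observe that the first two factors compose to a Mitchell-style forcing whose quotient has the $\kappa^+$-approximation property — a property preserved by a subsequent $\kappa^+$-cc extension — or appeal to an Unger-type branch lemma directly: an iteration of a forcing with $\kappa^+$-approximation and a $\kappa^+$-cc forcing cannot add a cofinal branch to a $\kappa^{++}$-tree of the ground model. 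Since every $\kappa^+$-initial segment of $b$ lies in $V[G*H]$ (as $T$ does), either route places $b\in V[G*H]$, establishing $\TP(\kappa^{++})$ in $V[\M(\kappa,\lambda)*\dot{\Q}]$. Verifying that the ``approximation-preservation'' (or branch-lemma) step applies cleanly to the three-step quotient in the stated order is the step requiring most care.
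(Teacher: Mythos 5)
Your overall skeleton (reflect with a weakly compact embedding, read off a branch from level $\lambda$ of $j(T)$, pull it back by a quotient analysis) is the same as in \cite{HS:ind} and in the proof of Theorem \ref{th:wKH} here, but the two technically essential steps are carried out in a way that does not work. First, the lifting: you fix $G*H$ and then propose to ``build'' a generic for the Mitchell tail over $V[G*H]$ and to ``diagonalise through conditions below $j[H]$'' to get a generic for $j(\dot{\Q})$. Neither construction is available. The tail of $j(\M)$ is only ${<}\kappa$-closed on its Cohen part (and the relevant term forcing only $\kappa^+$-closed), not $\lambda$-closed, while there are $\lambda$-many dense sets from $N[\,\cdot\,]$ to meet; and $j(\dot{\Q})$ is merely $\kappa^+$-cc, so there is no closure with which to run any transfinite diagonalisation and no master condition for $j''H$ --- since $\dot{\Q}$ is not directed closed, $j''H$ need not have any lower bound at all. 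The actual argument goes in the opposite order: one forces over $V$ with $j((R^0\times R^1)*\dot{\Q})$ and \emph{derives} the small generic by pulling back along $j$, which is legitimate because $(R^0\times R^1)*\dot{\Q}$ is $\lambda$-cc and hence $j$ restricted to it is a regular embedding; equivalently, one forces with the quotient over $V[G*H]$. Note also that your sketch never uses the hypothesis that $\dot{\Q}$ is an $\Add(\kappa,\lambda)$-name rather than an $\M(\kappa,\lambda)$-name; that hypothesis is exactly what makes the quotient analysis below go through (cf.\ Claim \ref{claim:cc}).

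Second, the branch pull-back conflates (\ref{eq:M1}) with (\ref{eq:M2}). The quotient $\dot{R}^{\mathrm{tail}}$ of (\ref{eq:M2}) is only $\kappa^+$-distributive, not $\kappa^+$-closed, and mere distributivity yields no branch lemma: forcing with a $\kappa^{++}$-Suslin tree is $\kappa^+$-distributive and adds a cofinal branch to a $\kappa^{++}$-tree. The $\kappa^+$-closed object is the term forcing $R^1_\lambda$ of (\ref{eq:M1}), which only \emph{projects} onto the tail; accordingly the proof shows that the quotient $Q=j(\M*\dot{\Q})/(G*h)$ is a projection of $j(R^0*\dot{\Q})/(G^0*h)\times R^1_\lambda$, proves the first factor is $\kappa^+$-cc over $N[G][h]$ (this is where $\dot{\Q}$ being a name over the Cohen part, the closed term forcing $R^1$, and Easton's lemma are used), applies the $\kappa^{++}$-tree/$\kappa^+$-cc branch lemma to that factor and Unger's two-step lemma ($\kappa^+$-cc followed by $\kappa^+$-closed, using $2^\kappa=\kappa^{++}$) to $R^1_\lambda$, and then transfers ``no new branch'' through the projection onto $Q$. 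Your alternative, ``the Mitchell quotient has the $\kappa^+$-approximation property, which is preserved by a subsequent $\kappa^+$-cc extension,'' is not a quotable lemma in the form and order you need: what is required is that the tail add no branch \emph{over} $V[G*H]$, i.e.\ after the $\kappa^+$-cc forcing has already acted, and establishing that is precisely the term-forcing/product argument above. So the step you defer as ``requiring most care'' is in fact the entire content of the proof, and as set up (iteration in your stated order, with the distributive quotient in place of the closed term forcing) it would not go through.
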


\begin{proof}
The proof of Theorem 3.2 in \cite{HS:ind} is formulated for $\mathcal A = \lambda$. However, it is easy to check that the argument does not depend on the exact definition of $\mathcal A$: whenever $\M^{\mathcal A}(\kappa,\lambda)$ satisfies  conditions (\ref{eq:M1}) and (\ref{eq:M2}) and forces $2^\kappa = \lambda = \kappa^{++}$, the indestructibility argument of Theorem 3.2 applies. See the proof of Theorem \ref{th:wKH} for indestructibility of $\neg \wKH(\kappa^+)$ in this article for more details.
\end{proof}

Let us now extend Fact \ref{f:tp} to the negation of the weak Kurepa Hypothesis. 

\begin{definition}\label{def:sq}
Suppose $\kappa$ is a regular cardinal. We say that a forcing $\P$ is \emph{productively $\kappa$-cc} iff $\P \x \P$ is $\kappa$-cc.
\end{definition}

Notice that every $\kappa$-Knaster forcing is productively $\kappa$-cc, and every productively $\kappa$-cc forcing is $\kappa$-cc.

The following is well known and will be useful for us:

\begin{Fact}\label{f:sq}
Suppose $\kappa$ is a regular cardinal and $T$ is a tree of height $\kappa$. If $\Q$ is productively $\kappa$-cc, then $\Q$ does not add new cofinal branches to $T$.
\end{Fact}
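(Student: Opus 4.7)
The plan is to argue by contradiction. Suppose some $p_{0}\in\Q$ forces $\dot b$ to be a new cofinal branch of $\check T$. The first step is a \emph{splitting lemma}: below any $p\le p_{0}$, and above any prescribed level of $T$, there exist incompatible extensions $p_{1},p_{2}\le p$ forcing distinct nodes of $T$ into $\dot b$. Otherwise the tail of $\dot b$ would be determined by $p$ alone, producing a cofinal branch lying in $V$ and contradicting newness. Iterating this, I would recursively construct a tree of conditions $\{p_{\sigma}:\sigma\in 2^{<\kappa}\}\subseteq\Q$ together with nodes $s_{\sigma}\in T$ such that $p_{\sigma}\Vdash\check s_{\sigma}\in\dot b$, $p_{\tau}\le p_{\sigma}$ whenever $\sigma\subseteq\tau$, and the nodes $s_{\sigma\conc 0},s_{\sigma\conc 1}$ are distinct and lie on a common level of $T$ above $s_{\sigma}$.

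An induction on the level of the binary tree shows that whenever $\sigma,\tau\in 2^{<\kappa}$ are incomparable, the nodes $s_{\sigma},s_{\tau}$ are incomparable in $T$, because they sit in distinct subtrees above the split determined by the longest common initial segment of $\sigma$ and $\tau$. Consequently $p_{\sigma},p_{\tau}$ are incompatible in $\Q$: any common extension would force two incomparable nodes of $T$ into the branch $\dot b$.

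I would then consider the family $\{(p_{\sigma\conc 0},p_{\sigma\conc 1}):\sigma\in 2^{<\kappa}\}\subseteq\Q\times\Q$. A short case analysis on how two distinct $\sigma,\tau$ can be related (incomparable, or one a proper extension of the other, splitting according to whether the first differing bit is $0$ or $1$) shows that for any distinct pair at least one of the two coordinate pairs is incompatible, so the family is an antichain in $\Q\times\Q$. To finish: if at some level $\alpha<\kappa$ we already have $|2^{\alpha}|\ge\kappa$, then $\{p_{\sigma}:\sigma\in 2^{\alpha}\}$ is already an antichain of size $\ge\kappa$ in $\Q$, contradicting the $\kappa$-cc of $\Q$ (which follows from productive $\kappa$-cc); otherwise $|2^{<\kappa}|=\kappa$ by regularity of $\kappa$, and the above family is an antichain of size $\kappa$ in $\Q\times\Q$, directly contradicting productive $\kappa$-cc. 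Either way, we reach the desired contradiction. The main obstacle is the recursive construction itself: at each stage one must apply the splitting lemma below $p_{\sigma}$ and carefully align the two new nodes at a common level of $T$, so that incomparability of distinct binary strings translates cleanly into incomparability in $T$; once the tree of conditions is in place, the remainder is a routine antichain count.
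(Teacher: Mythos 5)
Your splitting lemma is correct, and so is the case analysis showing that the pairs $(p_{\sigma\conc 0},p_{\sigma\conc 1})$ would form an antichain in $\Q\times\Q$ \emph{if} the tree of conditions $\{p_\sigma:\sigma\in 2^{<\kappa}\}$ existed. The genuine gap is at the limit levels of that recursion: to define $p_\sigma$ for $\sigma$ of limit length $\delta<\kappa$ you need a lower bound for the strictly decreasing chain $\{p_{\restr{\sigma}{\beta}}:\beta<\delta\}$, and productive $\kappa$-cc gives no closure whatsoever. For uncountable regular $\kappa$ (the case the paper actually needs, $\kappa=\kappa^+$ of the application) the construction therefore cannot in general pass even level $\omega$ -- e.g.\ for $\kappa=\omega_1$ and $\Q$ any nontrivial ccc forcing, decreasing $\omega$-chains have no lower bounds -- and finitely many levels yield only countably many pairs, not $\kappa$ many. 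Your fallback ``if $|2^\alpha|\ge\kappa$ at some level then level $\alpha$ is already a $\kappa$-sized antichain in $\Q$'' makes the problem visible: since the construction uses only newness of the branch (no chain condition), if it went through it would prove that \emph{every} $\kappa$-cc forcing adds no new cofinal branches to trees of height $\kappa$, which is false (a Suslin tree is ccc and adds a new branch to itself); the failure occurs exactly at the limit stages you did not address.

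The standard proof avoids decreasing chains of conditions altogether. Work below $p_0$ forcing $\dot b$ to be new, and choose, by recursion on $\beta<\kappa$ but with a recursion only on \emph{levels}, a level $\rho_\beta$ above all levels used earlier (possible since $\kappa$ is regular), a condition $p'\le p_0$ deciding the node $u_\beta$ of $\dot b$ at level $\rho_\beta$, and then -- by your splitting lemma applied below $p'$ -- two extensions $p_\beta,q_\beta\le p'$ forcing distinct nodes $s_\beta\neq t_\beta$ of a common level $\gamma_\beta\ge\rho_\beta$ into $\dot b$. Each choice is made independently below $p_0$; no lower bounds are ever taken. If $\alpha<\beta$ and $(r,r')$ extended both $(p_\alpha,q_\alpha)$ and $(p_\beta,q_\beta)$, then $r$ forces $s_\alpha$ and $u_\beta$ into the chain $\dot b$, so $s_\alpha<_T u_\beta$, and likewise $r'$ gives $t_\alpha<_T u_\beta$; but $u_\beta$ has a unique predecessor at level $\gamma_\alpha$, so $s_\alpha=t_\alpha$, a contradiction. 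Hence $\{(p_\beta,q_\beta):\beta<\kappa\}$ is an antichain of size $\kappa$ in $\Q\times\Q$, contradicting productive $\kappa$-cc. I recommend replacing your tree-of-conditions construction by this argument (your version is fine for $\kappa=\omega$, where there are no limit stages, but not beyond).
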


The productive chain condition is not so well-behaved with regard to preservation under iterations as the regular chain condition or the Knaster condition, but there is a weaker characterization. To formulate it, let us introduce the following notation: Suppose $\dot{\Q}$ is a $\P$-name. We can view it artificially as a $\P \x \P$-name by modifying it to depend only on the first coordinate or the second coordinate of $\P \x \P$, obtaining $\dot{\Q}^1$ and $\dot{\Q}^2$, respectively.\footnote{\label{ft:mod} Define recursively a function ${}^*$ from $\P$-names to $\P \x \P$-names so that $\sigma^* = \set{[(p,1),\tau^*]}{(p,\tau) \in \sigma}$ for $\dot{\Q}^1$ and $\set{[(1,p),\tau^*]}{(p,\tau) \in \sigma}$ for $\dot{\Q}^2$.}

Recall the following characterization which holds for the regular chain condition: 

\begin{equation} \label{eq:c} \P * \dot{\Q} \mbox{ is }\kappa\mbox{-cc} \Iff \P \mbox{ is }\kappa\mbox{-cc and } \P \Vdash \dot{\Q} \mbox{ is } \kappa \mbox{-cc}.\end{equation}

For the productively $\kappa$-cc condition, we have the following weaker characterization:

\begin{lemma}\label{lm:sq}
Let $\P * \dot{\Q}$ be a forcing notion and $\kappa$ a regular cardinal. Then the following hold:
\begin{enumerate}[(i)]
\item If $\P*\dot{\Q}$ is productively $\kappa$-cc, then $\P$ is productively $\kappa$-cc and forces that $\dot{\Q}$ is productively $\kappa$-cc.
\item If $\P$ is productively $\kappa$-cc and $\P \x \P$ forces that $\dot{\Q}^1 \x \dot{\Q}^2$ is $\kappa$-cc, then $\P* \dot{\Q}$ is productively $\kappa$-cc.
\item Suppose $\P$ is productively $\kappa$-cc and $\P$ forces that $\dot{\Q}$ is $\kappa$-Knaster. Then $\P*\dot{\Q}$ is productively $\kappa$-cc.
\end{enumerate}
\end{lemma}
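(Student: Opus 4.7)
The overall plan is to prove (i) by lifting witnesses of failure from $\P\x\P$ and from $\dot{\Q}\x\dot{\Q}$ up to $(\P*\dot{\Q})^2$; to prove (ii) via the canonical forcing-equivalence $(\P*\dot{\Q})^2 \cong (\P\x\P)*(\dot{\Q}^1\x\dot{\Q}^2)$ combined with (\ref{eq:c}); and to reduce (iii) to (ii) through the fact that the product of a $\kappa$-Knaster poset with a $\kappa$-cc poset is $\kappa$-cc.

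For (i), the first part uses that the map $\P\x\P \hookrightarrow (\P*\dot{\Q})^2$ given by $(p,p')\mapsto((p,\dot{1}),(p',\dot{1}))$ (with $\dot{1}$ the canonical name for the top of $\dot{\Q}$) preserves incompatibility, since any common extension of its image would project to a common extension in $\P\x\P$; hence antichains in $\P\x\P$ lift to antichains in $(\P*\dot{\Q})^2$, so $(\P*\dot{\Q})^2$ being $\kappa$-cc forces $\P\x\P$ to be $\kappa$-cc. The second part of (i) proceeds analogously: given a $\P$-name for a $\kappa$-antichain in $\dot{\Q}\x\dot{\Q}$ forced by some $p\in\P$, a careful lifting to $(\P*\dot{\Q})^2$ (choosing refinements of $p$ so that the incompatibility in $\dot{\Q}\x\dot{\Q}$ is decided in a way that precludes compatibility in the product) produces a $\kappa$-antichain, contradicting the hypothesis.

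For (ii), use the canonical coordinate rearrangement $(\P*\dot{\Q})^2 \cong (\P\x\P)*(\dot{\Q}^1\x\dot{\Q}^2)$ and apply (\ref{eq:c}) to the resulting iteration, noting that $\P\x\P$ is $\kappa$-cc (by productive $\kappa$-cc of $\P$) and $\P\x\P\Vdash\dot{\Q}^1\x\dot{\Q}^2$ is $\kappa$-cc (by the second hypothesis).

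For (iii), I plan to apply (ii), so I need to show $\P\x\P \Vdash \dot{\Q}^1\x\dot{\Q}^2$ is $\kappa$-cc. Passing to $V[G_1]$ for $G_1$ a $\P$-generic over $V$: a standard maximal-principle argument shows that $\P\x\P$ being $\kappa$-cc in $V$ implies $\P$ is $\kappa$-cc in $V[G_1]$ (given a name for a $\kappa$-antichain of $\P$ in $V[G_1]$, representatives assemble into a $\kappa$-antichain in $\P\x\P$ in $V$). Since $\dot{\Q}$ remains a $\P$-name for a $\kappa$-Knaster (hence $\kappa$-cc) forcing over $V[G_1]$, (\ref{eq:c}) gives $\P*\dot{\Q}$ is $\kappa$-cc in $V[G_1]$. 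Since $\dot{\Q}[G_1]$ is $\kappa$-Knaster in $V[G_1]$, the product $\dot{\Q}[G_1]\x(\P*\dot{\Q})$ is $\kappa$-cc in $V[G_1]$ by the Knaster-cc product fact. Rearranging this product as the iteration $\P*(\dot{\Q}[G_1]\x\dot{\Q})$—valid because $\dot{\Q}[G_1]$ is constant with respect to the $\P$-forcing over $V[G_1]$—and applying (\ref{eq:c}) once more, I conclude $\P$ forces over $V[G_1]$ that $\dot{\Q}[G_1]\x\dot{\Q}$ is $\kappa$-cc. Interpreting via a $\P$-generic $G_2$ such that $G_1\x G_2$ is $\P\x\P$-generic over $V$, this yields $\dot{\Q}[G_1]\x\dot{\Q}[G_2]$ is $\kappa$-cc in $V[G_1][G_2]$, as required by (ii). The hardest step will be justifying the rearrangement $\dot{\Q}[G_1]\x(\P*\dot{\Q}) \cong \P*(\dot{\Q}[G_1]\x\dot{\Q})$ together with the preservation lemma $\P\x\P$ $\kappa$-cc $\Rightarrow$ $\P$ $\kappa$-cc in $V^\P$, both standard but deserving careful bookkeeping.
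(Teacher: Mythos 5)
The main problem is in the second half of (i). Your plan is to take $p \Vdash$ ``$\{(\dot a_\alpha,\dot b_\alpha) : \alpha<\kappa\}$ is an antichain in $\dot{\Q}\x\dot{\Q}$'' and lift it to a $\kappa$-antichain in $(\P*\dot{\Q})\x(\P*\dot{\Q})$, but the natural lift $((p,\dot a_\alpha),(p,\dot b_\alpha))$ need not be an antichain at all: two such conditions are compatible as soon as there exist $r\le p$ and $r'\le p$, \emph{possibly incompatible with each other}, such that $r\Vdash \dot a_\alpha \parallel \dot a_\beta$ and $r'\Vdash \dot b_\alpha \parallel \dot b_\beta$, whereas the forced antichain statement only rules out a single condition forcing both compatibilities in one extension. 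Your parenthetical fix, ``choosing refinements of $p$ so that the incompatibility is decided in a way that precludes compatibility in the product,'' is exactly the missing content: to make the lifted family an antichain you would need, for every pair $\alpha\neq\beta$, that \emph{every} common extension of the chosen refinements on one of the two sides forces the corresponding coordinates incompatible, and with $\kappa$ many indices this cannot in general be arranged by refining each condition once (decisions are pairwise, not per index). This is precisely why the paper does not lift antichains here but instead iterates (\ref{eq:c}): productive $\kappa$-cc of $\P*\dot{\Q}$ gives $\P*\dot{\Q}\Vdash (\P*\dot{\Q})^V$ is $\kappa$-cc, which is read as $\P\Vdash$ ``$\dot{\Q}*(\P*\dot{\Q})^V$ is $\kappa$-cc,'' from which $\P\Vdash \dot{\Q}\x\dot{\Q}$ is $\kappa$-cc is extracted. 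As written, your (i) has a genuine gap at its key step; the first half of (i) (the embedding of $\P\x\P$) is fine.

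Two further remarks. In (ii), the route through the rearrangement $(\P*\dot{\Q})^2 \cong (\P\x\P)*(\dot{\Q}^1\x\dot{\Q}^2)$ plus (\ref{eq:c}) does work, but be aware that the nontrivial direction of that equivalence (incompatibility in $(\P*\dot{\Q})^2$ transferring to the rearranged iteration) is exactly the mixing ``observation'' (\ref{eq:sq}) that the paper proves by hand; you should either prove the dense embedding or argue directly as the paper does, rather than cite the rearrangement as canonical. In (iii), the assertion that ``$\dot{\Q}$ remains a $\P$-name for a $\kappa$-Knaster forcing over $V[G_1]$'' is not automatic: Knasterness of $\dot{\Q}^{G_2}$ must hold in $V[G_1][G_2]$, not just in $V[G_2]$, and this is exactly where the paper invokes the fact that a $\kappa$-cc forcing preserves $\kappa$-Knasterness (Lemma 3 of \cite{C:trees}). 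Once that fact is added, your (somewhat more roundabout) reduction of (iii) to (ii) does go through, and is a legitimate alternative to the paper's shorter argument, which verifies the hypothesis of (ii) directly by preserving Knasterness into $V[G_1\x G_2]$ on both coordinates.
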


\begin{proof}
For (i),  $\P$ is productively $\kappa$-cc because there is a natural regular embedding from $\P \x \P$ into $(\P * \dot{\Q}) \x (\P * \dot{\Q})$. We use (\ref{eq:c}) repeatedly for the second claim: If $\P * \dot{\Q}$ is productively $\kappa$-cc, then $\P * \dot{\Q} \Vdash \P *\dot{\Q}$ is $\kappa$-cc. This is equivalent to $\P \Vdash \dot{\Q} \Vdash \P *\dot{\Q}$ is $\kappa$-cc, which is in turn equivalent to $\P \Vdash \dot{\Q} * \P * \dot{\Q}$ is $\kappa$-cc, which readily implies $\P \Vdash \dot{\Q} \x \dot{\Q}$ is $\kappa$-cc.\footnote{We can continue to obtain a partial converse to the implication in (ii): $\P \Vdash \dot{\Q} * \P * \dot{\Q}$ is $\kappa$-cc implies $\P \Vdash \P \Vdash \dot{\Q} \x \dot{\Q}$ is $\kappa$-cc, equivalently $\P \x \P \Vdash \dot{\Q}^1 \x \dot{\Q}^1$ is $\kappa$-cc, and by mutual genericity of generic filters for $\P \x \P$, $\P \x \P \Vdash \dot{\Q}^2 \x \dot{\Q}^2$ is $\kappa$-cc. But this is still weaker than the condition which implies $\P * \dot{\Q}$ is productively $\kappa$-cc in (ii).}

For (ii),  assume for contradiction that $A = \set{((p_\alpha,\dot{q}_\alpha),(p'_\alpha,\dot{q}'_\alpha))}{\alpha<\kappa}$ is an antichain in $(\P * \dot{\Q}) \x (\P*\dot{\Q})$ of size $\kappa$. We shall need the following observation: if $((p_\alpha,\dot{q}_\alpha),(p'_\alpha,\dot{q}'_\alpha))$ and $((p_\beta,\dot{q}_\beta),(p'_\beta,\dot{q}'_\beta))$, $\alpha < \beta$, are in $A$, and there are conditions $p,p'$ in $\P$ such that $p \le p_\alpha,p_\beta$ and $p' \le p'_\alpha,p'_\beta$, then either $p \Vdash \dot{q}_\alpha \perp \dot{q}_\beta$ or $p' \Vdash \dot{q}'_\alpha \perp \dot{q}'_\beta$, which can be reformulated for the forcing $(\P \x \P) * (\dot{\Q}^1 \x \dot{\Q}^2)$ as: \begin{equation}\label{eq:sq}(p,p') \Vdash (\dot{q}_\alpha,\dot{q}'_\alpha) \perp (\dot{q}_\beta,\dot{q}'_\beta),\end{equation} by replacing the names $\dot{q}_\alpha, \dot{q}'_\alpha$, etc., as explained in Footnote \ref{ft:mod}.

Define $A^* = \set{((p_\alpha,p'_\alpha),\mathrm{pair}(\dot{q}_\alpha,\dot{q}'_\alpha))}{\alpha<\kappa}$, where $\mathrm{pair}(\dot{q}_\alpha,\dot{q}'_\alpha)$ is the canonical name for the ordered pair $(\dot{q}_\alpha,\dot{q}'_\alpha)$. $A^*$ is a $\P \x \P$-name for a subset of $\dot{\Q}^1 \x \dot{\Q}^2$. Since $\P$ is productively $\kappa$-cc, there is a generic filter $G$ such that for some $I \sub \kappa$ of size $\kappa$, $\set{(p_\alpha,p'_\alpha)}{\alpha \in I} \sub G$. Let us write $q_\alpha^1$ and $q'_\alpha{}^2$ for $\dot{q}_\alpha^{G^1}$ and $\dot{q}_\alpha'{}^{G^2}$, $\alpha < \kappa$, respectively (where $G^1$ and $G^2$ are projections of $G$ to the coordinates). We claim that $\set{(q_\alpha^1,q'_\alpha{}^2)}{\alpha \in I}$ is an antichain, which gives a contradiction: For any $\alpha < \beta \in I$, $(p_\alpha,p'_\alpha)$ and $(p_\beta,p'_\beta)$ are compatible because they are in $G$, and their lower bound $(p,p') \in G$ forces by (\ref{eq:sq}) $(\dot{q}_\alpha,\dot{q}'_\alpha) \perp (\dot{q}_\beta,\dot{q}'_\beta)$.

For (iii), first recall the following observation: if $P_1,P_2$  forcing notions, $P_1$ is $\kappa$-cc and $P_2$ is $\kappa$-Knaster, then $P_1$ forces that $P_2$ is $\kappa$-Knaster (see Lemma 3 in \cite{C:trees} for a proof). By (ii) of the present Lemma, it suffices to show that $\P\x \P$ forces that $\dot{\Q}^1 \x \dot{\Q}^2$ is $\kappa$-cc. Let $G_1 \times G_2$ be $\P \x \P$-generic, and let us denote $(\dot{\Q}^1)^{G_1}$ by $\Q^1$ and $(\dot{\Q}^2)^{G_2}$ by $\Q^2$. $\Q^1$ is $\kappa$-Knaster in $V[G_1]$ and by the observation at the beginning of this paragraph,  $\Q^1$ is $\kappa$-Knaster in $V[G_1][G_2]$ as well. Analogously, $\Q^2$ is $\kappa$-Knaster in $V[G_2]$ and also in $V[G_2][G_1]$. It follows that both $\Q^1$ and $\Q^2$ are $\kappa$-Knaster in $V[G_1 \x G_2]$, and hence  their product $\Q^1 \x \Q^2$ is $\kappa$-Knaster.
\end{proof}

Theorem \ref{th:wKH} is slightly weaker than Fact \ref{f:tp} because it requires the productive $\kappa^+$-cc condition and not just the $\kappa^+$-cc. The reasons are technical: the tree property deals with $\kappa^{++}$-trees, and we used in \cite{HS:ind} the fact that a $\kappa^+$-cc forcing cannot add a new cofinal branch to a $\kappa^{++}$-tree; this is not true in general for trees of height $\kappa^+$ which appear in $\neg \wKH(\kappa^+)$. So we use Fact \ref{f:sq} instead.

\begin{theorem}[Indestructibility of the negation of the weak Kurepa Hypothesis] \label{th:wKH}
Assume $\omega \le \kappa < \lambda$ are cardinals, $\kappa^{<\kappa} = \kappa$ and $\lambda$ is weakly compact.  Suppose $\Add(\kappa,\lambda) * \dot{\Q}$ is productively $\kappa^+$-cc and preserves $\kappa$. Then $\M(\kappa,\lambda) * \dot{\Q}$ forces $\neg \wKH(\kappa^+)$.
\end{theorem}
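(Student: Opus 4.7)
My plan is to adapt the proof template of Theorem 3.2 in \cite{HS:ind} (which gives Fact \ref{f:tp}) from the setting of $\kappa^{++}$-trees to trees of height $\kappa^+$ with many cofinal branches. Suppose for contradiction that $G*H$ is $\M(\kappa,\lambda) * \dot{\Q}$-generic over $V$ and that $V[G*H]$ contains a tree $T \sub \kappa^+$ of height and size $\kappa^+$, together with an injective sequence $\vec{b} = \langle b_\alpha : \alpha < \lambda\rangle$ of cofinal branches through $T$. By the $\lambda$-cc of $\M(\kappa,\lambda) * \dot{\Q}$, fix nice names $\dot{T}, \dot{\vec{b}}$ in $V$ of size at most $\lambda$. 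Using the weak compactness of $\lambda$, choose a transitive $M \sub V$ of size $\lambda$, closed under $<\lambda$-sequences, containing $\M(\kappa,\lambda), \dot{\Q}, \dot{T}, \dot{\vec{b}}$, and an elementary embedding $j: M \to N$ with critical point $\lambda$ and $N$ transitive.

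I lift $j$ to $j: M[G*H] \to N[G'*H']$ following the template of Fact \ref{f:tp} verbatim: the product/quotient decomposition (\ref{eq:M1}), (\ref{eq:M2}) of Mitchell forcing, together with the fact that $\dot{\Q}$ is an $\Add(\kappa,\lambda)$-name, yields the factorization $j(\M(\kappa,\lambda) * \dot{\Q}) \cong (\M(\kappa,\lambda) * \dot{\Q}) * \dot{P}^{\mathrm{tail}}$ in $N$, where $\dot{P}^{\mathrm{tail}}$ is assembled from the Cohen tail $\Add(\kappa, [\lambda, j(\lambda)))$ (productively $\kappa^+$-Knaster), a $\kappa^+$-distributive $R$-type quotient, and a quotient of $j(\dot{\Q})$.

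Since $T$ codes a tree on $\kappa^+ < \lambda$, $j(T) = T$ in $N[G'*H']$. By elementarity, $j(\vec{b})$ enumerates $j(\lambda)$ pairwise distinct cofinal branches through $T$ in $N[G'*H']$, so $b^* := j(\vec{b})(\lambda)$ is a cofinal branch through $T$ distinct from each $b_\alpha = j(\vec{b})(\alpha)$ for $\alpha<\lambda$. The point where the argument diverges from Fact \ref{f:tp} is in descending $b^*$ to $M[G*H]$: since $T$ has height $\kappa^+$ rather than $\kappa^{++}$, mere $\kappa^+$-cc of $\dot{P}^{\mathrm{tail}}$ is not sufficient, so one needs $\dot{P}^{\mathrm{tail}}$ to be \emph{productively} $\kappa^+$-cc in order to invoke Fact \ref{f:sq}. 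I verify this by iterated application of Lemma \ref{lm:sq}(ii)--(iii): the Cohen tail is productively $\kappa^+$-cc via its Knaster property, the $j(\dot{\Q})$-quotient is productively $\kappa^+$-cc by the hypothesis on $\Add(\kappa,\lambda) * \dot{\Q}$ transferred by elementarity, and the distributive $R$-type piece is trivially $\kappa^+$-cc. Hence $b^* \in M[G*H]$.

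The contradiction is extracted via elementarity: arrange $\dot{\vec{b}}$ so that in $M[G*H]$ the sequence $\vec{b}$ canonically enumerates the length-$\lambda$ initial segment of cofinal branches under a sufficiently definable well-ordering of $\mathcal{P}(\kappa^+)$ (chosen in $V$ and placed into $M$); by elementarity $j(\vec{b})$ is the canonical analogous enumeration in $N[G'*H']$, and the branch $b^*\in M[G*H]$ then has to coincide with some $b_\alpha$, contradicting $b^*\neq b_\alpha$ for every $\alpha<\lambda$. The main technical obstacle is the productive $\kappa^+$-cc verification of $\dot{P}^{\mathrm{tail}}$ through the iterated use of Lemma \ref{lm:sq}; everything else is a direct translation of the tree property argument of \cite{HS:ind}.
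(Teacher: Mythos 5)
Your overall architecture (lift a weakly compact embedding through the product/quotient analysis of $\M(\kappa,\lambda)$, show the quotient of $j(\M(\kappa,\lambda)*\dot{\Q})$ adds no cofinal branch to $T$, then contradict elementarity) is the right one and is essentially what the paper does, but there is a genuine gap at the crucial branch-preservation step. Your quotient/tail contains, besides the productively $\kappa^+$-cc pieces, the $\kappa^+$-closed term-forcing part of the Mitchell decomposition (the forcing $R^1_\lambda$ from (\ref{eq:M1})); your claim that this ``distributive $R$-type piece is trivially $\kappa^+$-cc'' is false -- it is $\kappa^+$-closed of size $\ge\kappa^{++}$ and has antichains of size $\kappa^{++}$, so the whole tail is not productively $\kappa^+$-cc (indeed not even $\kappa^+$-cc), and Fact \ref{f:sq} cannot be applied to it. This is not a removable technicality: for trees of height $\kappa^+$ a $\kappa^+$-closed forcing \emph{can} add cofinal branches in general (e.g.\ $\Add(\kappa^+,1)$ adds a branch to $(2^{<\kappa^+},\sub)$), so the closed part needs a separate argument. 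The paper handles it by splitting the quotient $Q$ into a projection of $j(R^0*\dot{\Q})/(G^0*h)\x R^1_\lambda$, using Fact \ref{f:sq} only for the productively $\kappa^+$-cc factor $j(R^0*\dot{\Q})/(G^0*h)$, and invoking Unger's lemma (\cite[Lemma 6]{UNGER:1}) for $R^1_\lambda$: a $\kappa^+$-closed forcing over a $\kappa^+$-cc extension that adds a branch forces the tree to have a level of size $2^\kappa=\kappa^{++}$, which is impossible for $T$ of size $\kappa^+$. Without this (or an equivalent) argument for the closed factor, your branch $b^*$ cannot be pulled down.

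A secondary problem is your final contradiction. A well-ordering of $\mathcal{P}(\kappa^+)$ ``chosen in $V$ and placed into $M$'' does not well-order the branches of $T$ in the extension, since new subsets of $\kappa^+$ appear, so the ``canonical enumeration'' device does not work as stated; moreover $T$ may have more than $\lambda$ branches in $V[G*H]$, so membership of $b^*$ in $M[G*H]$ alone does not force it into the range of $\vec{b}$. The standard (and the paper's) way to close the argument is a counting argument: since $j(\lambda)$ is inaccessible in $N[G][h]$ and $T$ has size $\kappa^+$, $T$ has fewer than $j(\lambda)$ branches there, so if the quotient adds no branches then $T$ cannot have $j(\lambda)$ branches in $N[G][h][G_Q]$, contradicting elementarity of the lifted embedding. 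This replacement is routine, but the treatment of the $\kappa^+$-closed factor is the essential missing idea.
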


\begin{proof}
Notice that we require that $\Add(\kappa,\lambda) * \dot{\Q}$ is productively $\kappa^+$-cc, and not the potentially weaker condition that $\Add(\kappa,\lambda)$ forces that $\dot{\Q}$ is productively $\kappa^+$-cc (see Lemma \ref{lm:sq}(ii)). However, in many situations this is easy to ensure: for instance if $\dot{\Q}$ is forced to be $\kappa^+$-Knaster, then $\Add(\kappa,\lambda)*\dot{\Q}$ is productively $\kappa$-cc by Lemma \ref{lm:sq} (iii)).

The proof closely follows the proof of \cite[Theorem 3.2]{HS:ind}. We will review its basic steps to make the proof self-contained. Let us denote $\M(\kappa,\lambda)$ by $\M$ and $\Add(\kappa,\lambda)$ by $R^0$.

By standard arguments, we can assume that $\Q$ has size at most $\kappa^{++}$ in $V[\M]$ and we can view it as a subset of $\kappa^{++}$ by using an isomorphic copy if necessary (a straightforward modification of Lemma 3.1 in \cite{HS:ind}  which is formulated for $\kappa^{++}$-Aronszajn trees). 

Let us first make the convention that we identify $\dot{\Q}$ with an $R^0$-name and we only consider conditions $(p,\dot{q})$ in $\M* \dot{\Q}$ in which $\dot{q}$ depends only on the $R^0$-information of $\M$.

Let $R^0 \x R^1$ denote the product $\Add(\kappa,\lambda) \x R^1_0$ from (\ref{eq:M1}). Let us fix a weakly compact embedding $j: M \to N$ with critical point $\lambda$ such that $M$ has size $\lambda$, is closed under $<\lambda$-sequences and contains all relevant parameters, in particular the forcing $(R^0 \x R^1) * \dot{\Q}$ and a name $\dot{T}$ for a weak $\kappa^+$-Kurepa tree which we assume is forced to exist for contradiction. We can further assume that $j$ itself is an element of $N$ so that a quotient analysis discussed below can be expressed in $N$ (see Fact 2.8(ii) in \cite{HS:ind}). 

Let $(\tilde{G}^0 \x \tilde{G}^1) * h^*$ be $j((R^0 \x R^1) * \dot{\Q})$-generic filter over $V$ (and hence also over $N$). Since $(R^0 \x R^1) * \dot{\Q}$ is $\lambda$-cc,  $(\tilde{G}^0 \x \tilde{G}^1) * h^*$ generates an $M$-generic filter $(G^0 \x G^1) * h$ for $(R^0 \x R^1) * \dot{\Q}$, and $\tilde{G}^0 \x \tilde{G}^1$ generates an $N$-generic filter $G^*$ for $j(\M)$ and an $M$-generic filter $G$ for $\M$ such that $j$ lifts in $N[(\tilde{G}^0 \x \tilde{G}^1)*h^*]$ to: \beq \label{eq:decompose} j: M[G][h] \to N[G^* * h^*] = N[G][h][G_Q],\eeq where $G_Q$ is a generic filter for the quotient $Q = j(\M*\dot{\Q})/G*h$. 

Our plan is to show that, \beq \label{eq:embed}\mbox{there is a projection onto } Q \mbox{ from } j(R^0 * \dot{\Q}))/(G^0 * h) \x R^1_\lambda,\eeq where $R^1_\lambda$ is the term forcing of $j(\M)/G$ from (\ref{eq:M1}). We will further show that $j(R^0 * \dot{\Q})/(G^0 * h)$ is productively $\kappa^+$-cc over $N[G][h]$ and $R^1_\lambda$ is $\kappa^+$-closed in $N[G]$ which will allow us to finish the argument.

\brm
Notice that it makes sense to consider the generic filter $G^0*h$ and the extension $N[G^0*h]$: it holds that $\dot{\Q}^G = \dot{\Q}^{G^0}$ since by our assumption $\dot{\Q}$ can be identified with an $R^0$-name.
\erm

Since $j$ is the identity on the conditions in $G$, we have \beq j''(G*h) = \set{(p,j(\dot{q}))}{p \in G \mbox{ and }\dot{q}^G \in h}.\eeq

Let us write explicitly the quotients we are going to use: \beq Q = \set{(p^*,\dot{q}^*) \in j(\M * \dot{\Q})}{N[G][h] \models \mbox{``}(p^*,\dot{q}^*) \mbox{ is compatible with }j''(G*h)\mbox{''}},\eeq where we can assume that $\dot{q}^*$ depends by elementarity only on $j(R^0)$. Further, \begin{multline} j(R^0 * \dot{\Q})/(G^0 * h) = \\ \set{(p^{*0},\dot{q}^*) \in j(R^0 * \dot{\Q})}{N[G^0][h] \models \mbox{``}(p^{*0},\dot{q}^*) \mbox{ is compatible with } j''(G^0 * h)\mbox{''}}.\end{multline} Lastly, \beq R^1_\lambda = \set{(0,p^{*1})}{N[G] \models \mbox{``}(0,p^{*1}) \mbox{ is compatible with }j''G = G\mbox{''}}.\eeq

Let us define a function $\pi: j(R^0 * \dot{\Q}))/(G^0 * h) \x R^1_\lambda \to Q$ by \beq \pi((p^{*0},\dot{q}^*),p^{*1}) = (p^*,\dot{q}^*),\eeq where $p^* = (p^{*0},p^{*1})$.

The following claim is proved as in Claim 3.4 in \cite{HS:ind}:

\begin{Claim}\label{pi}
$\pi$ is a projection from $j(R^0 * \dot{\Q})/(G^0 *h) \x R^1_\lambda$ onto $Q$.
\end{Claim}

We need the following Claim which is a variant of an analogous Claim 3.5 in \cite{HS:ind} modified in Claim \ref{claim:cc}(ii) to discuss the productively $\kappa^+$-cc condition:

\begin{Claim}\label{claim:cc}
\bce[(i)]
\item $R^1_\lambda$ is $\kappa^+$-closed in $N[G]$.
\item $j(R^0 * \dot{\Q})/G^0 *h$ is productively $\kappa^+$-cc over $N[G][h]$.
\item $\dot{\Q}^{G^0} * j(R^0 * \dot{\Q})/G^0 * \dot{h}$ is $\kappa^+$-cc over $N[G]$, where $j(R^0 * \dot{\Q})/G^0 * \dot{h}$ denotes a $\dot{\Q}^{G^0}$-name for the quotient.
\ece
\end{Claim}

\begin{proof}
For (i), the forcing $R^1_\lambda$ is $\kappa^+$-closed in $N[G]$ by (\ref{eq:M1}). 

For (ii), by elementarity, \beq j(R^0 * \dot{\Q}) \mbox{ is productively $\kappa^+$-cc over $N$.}\eeq The term forcing $R^1$ is $\kappa^+$-closed over $N$. By Easton's lemma \beq \label{eq:ccc1} j(R^0*\dot{\Q}) \mbox{ is productively $\kappa^+$-cc over $N[G^1]$.}\eeq Since $j$ restricted to $R^0 * \dot{\Q}$ is a regular embedding, $j(R^0 * \dot{\Q})$ factors over $N$ (and then also over $N[G^1]$) as \beq (R^0 * \dot{\Q}) * j(R^0 * \dot{\Q})/\dot{G}^0 * \dot{h},\eeq  where $j(R^0 * \dot{\Q})/\dot{G}^0 * \dot{h}$ is an $R^0 * \dot{\Q}$-name for the quotient. It follows by (\ref{eq:ccc1}) and Lemma \ref{lm:sq}(i) that $R^0 * \dot{\Q}$ forces in $N[G^1]$ that $ j(R^0 * \dot{\Q})/\dot{G}^0 * \dot{h}$ is productively $\kappa^+$-cc, hence $j(R^0 * \dot{\Q})/G^0 *h$ is $\kappa^+$-cc over $N[G^1][G^0*h]$.

Since there is a natural projection from $(R^0 *\dot{\Q}) \x R^1$ onto $\M * \dot{\Q}$ (analogously to the projection $\pi$ mentioned above), it follows that $j(R^0 * \dot{\Q})/G^0 *h$ is productively $\kappa^+$-cc over $N[G][h]$ as desired (since the chain condition is preserved downwards).

For (iii) first note that in this item we refer just to the $\kappa^+$-cc condition, not the productively $\kappa^+$-cc condition. Recall that $\dot{\Q}^{G^0}$ is $\kappa^+$-cc in $N[G]$ by our initial assumption. By (ii) of the present Claim, $\dot{\Q}^{G^0}$ forces over $N[G]$ that $j(R^0 * \dot{\Q})/G^0 *\dot{h}$ is $\kappa^+$-cc, hence $\dot{\Q}^{G^0} * j(R^0 * \dot{\Q})/G^0 * \dot{h}$ is $\kappa^+$-cc in $N[G]$.
\end{proof}

With Claims \ref{pi} and \ref{claim:cc}, the theorem is proved as follows. We assume for contradiction that there is in $M[G][h]$ a weak $\kappa^+$-Kurepa tree $T$ (we added into $M$ a name $\dot{T}$ for this tree at the beginning of the construction) with $\kappa^{++} = \lambda$ many cofinal branches. Then $T$ is also in $N[G][h]$. $T$ has $\kappa^{++} = j(\lambda)$ many cofinal branches in $N[G][h][G_Q]$ by the elementarity of the lifted embedding $j$ in (\ref{eq:decompose}). Since $j(\lambda)$ is inaccessible in $N[G][h]$, we also know that $T$ has $<j(\lambda)$ many cofinal branches in $N[G][h]$.

\begin{itemize}
\item Working in $N[G][h]$,  Claim \ref{claim:cc}(ii) and Fact \ref{f:sq}  imply that $j(R^0 * \dot{\Q})/G^0 *h$ cannot add new cofinal branches to $T$ over $N[G][h]$.

\item Unger essentially showed in \cite[Lemma 6]{UNGER:1} that if $P$ is $\kappa^+$-cc, $Q$ is $\kappa^+$-closed and adds a new cofinal branch to a tree $S$ in $V[P]$ whose height has cofinality at least $\kappa^+$, then $S$ must have in $V[P]$ a level of size at least $2^\kappa$.\footnote{Lemma \cite[Lemma 6]{UNGER:1}  is formulated for $\kappa^{++}$-trees $S$ and proceeds by a recursive construction of length $\kappa$ in a version of Silver's argument to show that if a new cofinal branch is added, then there must be a level of the tree $S$ of size $2^\kappa$: this gives a contradiction for our tree $T$ of size and height $\kappa^+$ as well.} Since $2^\kappa = \lambda = \kappa^{++}$  in $N[G]$, and hence also in the generic extension of $N[G]$ by the forcing $\dot{\Q}^{G^0} * j(R^0 * \dot{\Q})/G^0 * \dot{h}$, we apply Unger's lemma to the product  $$\dot{\Q}^{G^0} * j(R^0 * \dot{\Q})/(G^0 * \dot{h}) \x R^1_\lambda,$$ which has the right properties by Claim \ref{claim:cc}(i),(iii). It follows that $R^1_\lambda$ cannot add new cofinal branches to $T$ over the generic extension of $N[G][h]$ by the forcing $j(R^0 * \dot{\Q})/G^0 *h$.
\end{itemize}

It follows by these two bullets that the product \beq \label{product}  j(R^0 * \dot{\Q})/(G^0 *h ) \x R^1_\lambda \eeq does not add cofinal branches to $T$ over $N[G][h]$. However, by Claim \ref{pi}, there is a projection onto the quotient $Q$ from the product (\ref{product}), and therefore $Q$ cannot add a cofinal branch to $T$ either. It follows that $T$ cannot have $j(\lambda)$ many cofinal branches in $N[G][h][G_Q]$, which is the desired contradiction.
\end{proof}

As we discussed in Remarks \ref{rm:new1} and \ref{rm:new2}, we don't know whether $\DSS(\kappa^{++})$ holds in the Mitchell extension we use in Theorems \ref{th:P_1} and \ref{th:P_2}. However, the following indestructibility theorem will be useful for Theorem \ref{th:P_3}.

\begin{theorem}[Indestructibility of the disjoint stationary sequence property, essentially \cite{KR:DSS}] \label{th:dss}
Suppose $\kappa$ is an infinite cardinal and $\seq{s_\alpha}{\alpha \in S}$ is a disjoint stationary sequence on $\kappa^{++}$, with $S \sub \kappa^{++} \cap \cof(\kappa^+)$ stationary. Suppose $\P$ is a forcing notion which preserves $\kappa$, and moreover preserves stationary subsets of both $\kappa^+$ and $\kappa^{++}$. Then $\P$ forces that $\seq{s_\alpha}{\alpha \in S}$ is a disjoint stationary sequence on $\kappa^{++}$.
\end{theorem}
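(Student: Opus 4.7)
The plan is to verify each clause of Definition~\ref{def:dss} for $\seq{s_\alpha}{\alpha \in S}$ in $V[\P]$. Two clauses come essentially for free: $S$ remains stationary in $\kappa^{++}$ by the hypothesis on $\kappa^{++}$, and the pairwise disjointness $s_\alpha \cap s_\beta = \0$ is an absolute property of the fixed ground-model sets $s_\alpha, s_\beta$. The substantial task is to show that each $s_\alpha$ remains stationary in $\mathcal{P}_{\kappa^+}(\alpha)^{V[\P]}$, and I would reduce this to the preservation of stationary subsets of $\kappa^+$.

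Fix $\alpha \in S$. Since $\cf(\alpha) = \kappa^+$ and $\alpha < \kappa^{++}$, we have $|\alpha| = \kappa^+$, so I would fix in $V$ a bijection $e \colon \kappa^+ \to \alpha$; the induced map $x \mapsto e^{-1}[x]$ transports $\mathcal{P}_{\kappa^+}(\alpha)$ to $\mathcal{P}_{\kappa^+}(\kappa^+)$ and clubs to clubs in both $V$ and $V[\P]$, so it suffices to show that $s_\alpha^* := \{e^{-1}[x] : x \in s_\alpha\}$ stays stationary in $\mathcal{P}_{\kappa^+}(\kappa^+)^{V[\P]}$. The key observation is that $D := \{x \in \mathcal{P}_{\kappa^+}(\kappa^+) : x \mbox{ is an ordinal}\}$ is absolutely a club: its underlying set is the absolute set $\kappa^+$; unboundedness is immediate; and closure follows because a $\sub$-increasing chain of ordinals of length $<\kappa^+$ has ordinal union (this uses only the preservation of $\kappa^+$). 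Setting $s' := s_\alpha^* \cap D$ and identifying $D$ with $\kappa^+$, the set $s'$ is stationary in $\mathcal{P}_{\kappa^+}(\kappa^+)^V$, and is likewise stationary as a subset of $\kappa^+$ in $V$, since for any club $E \sub \kappa^+$ the set $\{x \in D : x \in E\}$ is a club in $\mathcal{P}_{\kappa^+}(\kappa^+)$. By the preservation hypothesis for $\kappa^+$, $s'$ remains stationary in $\kappa^+$ in $V[\P]$.

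Working in $V[\P]$, I would then take an arbitrary club $C \sub \mathcal{P}_{\kappa^+}(\kappa^+)$ and show $s_\alpha^* \cap C \ne \0$ by inspecting $\hat{C} := C \cap D$ as a subset of $\kappa^+$. Closedness of $\hat{C}$ in $\kappa^+$ is inherited directly from the closure of $C$ under $\sub$-increasing chains of length $<\kappa^+$. Unboundedness is the delicate part: given $\gamma < \kappa^+$, I would iteratively produce a $\sub$-increasing sequence $\la x_n : n < \omega \ra$ in $C$ with $\gamma + 1 \sub x_0$ and $\sup(x_n) + 1 \sub x_{n+1}$ (possible at each step by cofinality of $C$, since $\sup(x_n) < \kappa^+$), and observe that $x := \bigcup_n x_n \in C$ by closure, while the bookkeeping ensures $x = \sup_n \sup(x_n)$ is an ordinal above $\gamma$. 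Stationarity of $s'$ in $V[\P]$ then yields an ordinal in $s' \cap \hat{C}$, which is the desired witness in $s_\alpha^* \cap C$. The main obstacle of the argument is exactly this diagonal construction: one cannot extract an ordinal member of $C$ from a single application of cofinality, since an arbitrary element of $C$ need not be an ordinal.
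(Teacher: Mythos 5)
Your proof is correct and follows essentially the same route as the paper: reduce stationarity of $s_\alpha$ in $\mathcal{P}_{\kappa^+}(\alpha)$ to stationarity of a subset of $\kappa^+$ and invoke the preservation hypothesis there (your bijection $e$ together with the club of ordinal elements of $\mathcal{P}_{\kappa^+}(\kappa^+)$ is exactly the paper's filtration $\seq{x_i}{i<\kappa^+}$ of $\alpha$, namely $x_i = e[i]$, with the ``easy to verify'' equivalence spelled out via your diagonal $\omega$-chain argument).
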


\begin{proof}
Since $\P$ preserves  stationary subsets of $\kappa^{++}$, $S$ is still stationary. It suffices to check that if $\P$ preserves stationary subsets of $\kappa^+$, it preserves stationary subsets of $\mathcal{P}_{\kappa^+}(\alpha)$ for $\alpha \in S$. Let $\seq{x_i}{i<\kappa^+}$ be an increasing and continuous sequence of subsets of $\alpha$ of size $<\kappa^+$  whose union is $\alpha$; then it is easy to verify that $s$ is stationary in $\mathcal{P}_{\kappa^+}(\alpha)$ iff $\set{i < \kappa^+}{x_i \in s}$ is stationary in $\kappa^+$. It follows that if $\P$ preserves stationary subsets of $\kappa^+$, it also preserves stationary subsets of $\mathcal{P}_{\kappa^+}(\alpha)$.
\end{proof}

Note that \cite[Corollary 2.2]{GK:a} gives an indestructibility result for $\neg \AP(\kappa^{++})$, $\kappa$ regular, but it holds only for $\kappa$-centered forcings,\footnote{A forcing is $\kappa$-centered if it can be written at the union of $\kappa$-many filters, in particular it is $\kappa^+$-cc.} and thus is not sufficient for our purposes.

\section{A review of the argument which makes $\uu$ small}\label{sec:review}

\subsection{The original forcing}\label{sec:orig}

Let us briefly review the definition of the forcing $\P$ introduced in \cite{F:u} by Brooke-Taylor, Fischer, Friedman and Montoya. We first sumarize its basic structure and then give a precise statement in Definition \ref{def:P}. 

Let $\kappa$ be a Laver-indestructible supercompact cardinal, and $\mu \ge \kappa^{++}$ a cardinal of cofinality $>\kappa$ with $\mu^\kappa = \mu$. We view that the ordinals below $\mu^+$ as divided into three disjoint cofinal subsets which are reserved for three different tasks: the first subset $I_0$ is reserved for the Mathias forcing which controls the ultrafilter number $\mf{u}(\kappa)$, the second subset $I_1$ is used to control the minimal size of a $\kappa$-mad family $\mf{a}(\kappa)$, and the third subset $I_2$ is used to control the pseudo-intersection number $\mf{p}(\kappa)$. For concreteness, $I_0$ will be the set of even ordinals (including limit ordinals) below $\mu^+$ and $I_1$ and $I_2$  some sets of odd ordinals such that both $I_1$ and $I_2$ are cofinal in every limit ordinal (and $I_1 \cup I_2$ contain all odd ordinals in $\mu^+$). The iteration on the segments $I_1$ and $I_2$ is defined in the usual way with $<\kappa$ support, but $I_0$ is more complicated because it uses a lottery among forcings and two types of support to control $\mf{u}(\kappa)$.

The precise definition of the forcing  follows the definition at the beginning of Section 3 of \cite{F:u}. As defined, $\P$  allows any $\kappa$-centered, $\kappa$-directed closed forcing $\dot{\Q}_\alpha$ of size at most $\mu$ at odd stages. By using forcings to control minimal size of mad-families and the pseudo-intersection number we obtain the specific forcing we need (details are in the proof of \cite[Theorem 35]{F:u}).

\begin{definition}\label{def:P}
Let $\kappa$ be a Laver-indestructible supercompact cardinal, and $\mu \ge \kappa^{++}$ a cardinal of cofinality $>\kappa$ with $\mu^\kappa = \mu$. Then $\P = \seq{(\P_\alpha,\dot{\Q}_\beta)}{\alpha \le \mu^+, \beta < \mu^+}$ is defined by recursion on $\mu^+$ as follows:
\end{definition}

\begin{itemize}
\item (Lottery, $I_0$.) If $\alpha$ is an even ordinal ($\alpha \in \EVEN$), let $\mx{NUF}_\alpha$ be the set of normal ultrafilters on $\kappa$ in $V[\P_\alpha]$. Then let $\dot{\Q}_\alpha$ be a name for the poset with underlying set of conditions $$\{1_{\Q_\alpha}\} \cup \set{\{U\} \x \M^\kappa_U}{U \in \mx{NUF}_\alpha},$$ where $1_{\Q_\alpha}$ is denotes the greatest element of $\dot{\Q}_\alpha$, and extension relation is defined by $q\le p$ if and only if $p = 1_{\Q_\alpha}$ or there is $U \in \mx{NUF}_\alpha$ such that
$$p =(U,p_1), q = (U,q_1) \mbox{ and } q_1 \le_{\M^\kappa_U} p_1,$$ where $\M^\kappa_U$ is a generalized Mathias forcing with a $\kappa$-complete ultrafilter $U$, as defined in \cite[Definition 3]{F:u}.

\item ($I_1,I_2$.) If $\alpha$ is an odd ordinal ($\alpha \in \ODD$) then let $\dot{\Q}_\alpha$ be a name for a $\kappa$-centered, $\kappa$-directed closed forcing of size at most $\mu$.

\item (Supports for lottery on $I_0$.)

\begin{itemize}
\item \emph{Ultrafilter support of $p$}, denoted $\USupt(p)$, is defined as $$\USupt(p) = \set{\alpha \in \dom{p} \cap \EVEN}{\restr{p}{\alpha} \Vdash p(\alpha) \neq 1_{\Q_\alpha}}.$$ The support $\USupt(p)$ is required to be bounded below $\mu^+$ and closed downwards in the sense that if $\alpha \in \USupt(p)$ and $\beta \in \EVEN \cap \alpha$, then $\beta \in \USupt(p)$. 

\item \emph{Essential support of $p$}, denoted $\SSupt(p)$, is defined as the set of all $\alpha \in \dom{p} \cap \EVEN$ such that $$\neg (\restr{p}{\alpha} \Vdash p(\alpha) \in \{1_{\Q_\alpha}\} \cup \set{(U,1_U)}{U \in \mx{NUF}_\alpha}),$$ where $1_U$ is the greatest condition in $\M^\kappa_U$. The support $\SSupt(p)$ is required to have size $<\kappa$ and to be included as a subset in the supremum of $\USupt(p)$, and hence $\SSupt(p) \sub \USupt(p)$ by the downwards closure of $\USupt(p)$. Note that by definition, if $\alpha \in \USupt(p) \setminus \SSupt(p)$, then $\restr{p}{\alpha} \Vdash p(\alpha) = (U,1_U)$ for some $U \in \mx{NUF}_\alpha$.
\end{itemize}

\item (Supports for $I_1,I_2$.) \emph{Direct support of $p$}, denoted $\RSupt(p)$, is defined as the set of all $\alpha \in \dom{p} \cap \ODD$ such that
$$
\neg (\restr{p}{\alpha} \Vdash p(\alpha) = 1_{\Q_\alpha}).
$$
The support $\RSupt(p)$ is required to have size $<\kappa$ and to be a subset of the supremum of $\USupt(p)$.
\end{itemize}

If $\SSupt(p) = \emptyset$ and $\USupt(p)  = \gamma \cap \EVEN$, we write $\lot{\gamma}$ to indicate that $p$ has made its choice regarding the normal ultrafilters below $\gamma$, but has not chosen any (non-trivial) conditions in the respective Mathias forcings.

We will work with restrictions of the form $\P_\delta\da \lot{\delta}$ (the conditions in $\P_\delta$ which extend $\lot{\delta}$) which always have a dense subset of size at most $\mu$ and are $\kappa^+$-Knaster (unlike $\P_\delta$ which has large antichains due to the lottery over all normal ultrafilters on $\kappa$).\footnote{The article \cite{F:u} only states that $\P_\delta\da \lot{\delta}$ is $\kappa^+$-cc, but it is easy to see that it is $\kappa^+$-Knaster: This forcing has $<\kappa$-supports $\SSupt$ and $\RSupt$, and all iterands are $\kappa$-directed closed and $\kappa$-centered: their compatibility is determined by stems $s \in \kappa^{<\kappa}$ (the Mathias forcing on $I_0 \cup I_2$ and an almost-disjointness forcing on $I_1$). To argue that $\P_\delta\da \lot{\delta}$ is $\kappa^+$-Knaster, first use a $\Delta$-system lemma on the supports and then use the $\kappa$-centeredness of the iterands on the root of the system (the stems $s$ can be without the loss of generality represented by checked names).} 

The key idea in \cite{F:u} is to identify a suitable name $\UU$ for a normal ultrafilter on $\kappa$ in $V[\P]$, and truncate $\P$ at some ordinal $\delta \in (\mu,\mu^+)$ of the required cofinality such that for a certain condition $p_{\dot{U}}$ (which is of the form $\lot{\delta}$), the forcing $\P_\delta \da p_{\dot{U}}$ forces $\kappa^+ < \uu < 2^\kappa = \mu$. The ultrafilter number $\uu$ will be equal to the cofinality of $\delta$.

The main tool in the proof is to argue that any normal ultrafilter on $\kappa$ in $V[\P]$ reflects sufficiently often below $\mu^+$. Lemma \ref{lm:fix1} below is implicit in \cite[Lemma 10]{F:u}.

We say that $S \sub \mu^+$ is a $\cof(\bigger\kappa)$-club if $S$ is unbounded in $\mu^+$ and closed at limit points of cofinality $\bigger \kappa$.

\begin{lemma}\label{lm:fix1}
Assume $\kappa$, $\mu$ and $\P$ are is in Definition \ref{def:P}. Assume that \beq \label{1}1_\P \Vdash \UU \mbox{ is a normal ultrafilter on $\kappa$}.\eeq Then there is a $\mx{cof}(\bigger\kappa)$-club $S_{\UU} \sub \mu^+$ concentrating on limit ordinals (hence on $\EVEN$) where $\UU$ reflects. More precisely, there is a decreasing sequence $\seq{\lot{\alpha}}{\alpha \in S_{\UU}}$ continuous at points of cofinality $\bigger\kappa$ which chooses in the lottery the restrictions of the ultrafilter $\UU$ at the relevant stages:
\begin{enumerate}[(i)]
\item For every $\alpha \in S_{\UU}$, \beq \label{fix1} \lot{\alpha} \Vdash\UU \cap V[\P_\alpha] \in V[\P_\alpha].\eeq 
\item For all $\alpha < \alpha^* \in S_{\UU}$ \beq \label{fix2*} \lot{\alpha^*} \Vdash \dot{U}_\alpha = \UU \cap V[\P_\alpha] \in V[\P_\alpha],\eeq where  $\dot{U}_\alpha$ is a name of the ultrafilter selected by the lottery at stage $\alpha$ by $\lot{\alpha^*}$, i.e.\ $\lot{\alpha^*} \Vdash \la \dot{U}_\alpha, 1_{\dot{U}_\alpha} \ra \in \Q_\alpha$.
\item The sequence is continuous at ordinals of cofinality $\bigger \kappa$: for any limit $\delta$ in $S_{\UU}$ of cofinality $\bigger\kappa$, $\lot{\delta}$ is the infimum of $\seq{\lot{\beta}}{\beta < \delta}$ such that (\ref{fix2*}) holds for all $\alpha<\alpha^* < \delta$.
\end{enumerate}
\end{lemma}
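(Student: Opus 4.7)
The plan is to construct the sequence $\seq{\lot{\alpha}}{\alpha \in S_{\UU}}$ by identifying a $\cof(\bigger\kappa)$-club of stages at which $\UU$ reflects onto its initial segment, and then instructing the lottery at those stages to select exactly the reflected restrictions. The technical core, and the main obstacle, is the reflection step: showing that there is a $\cof(\bigger\kappa)$-club $S_{\UU} \sub \mu^+$ of even limit ordinals at which $\dot{U}_\alpha := \UU \cap V[\P_\alpha]$ already belongs to $V[\P_\alpha]$. The key ingredients are the $\mu^+$-cc of $\P$ (obtained from a $\Delta$-system argument on the $<\kappa$-sized essential and direct supports, combined with $\kappa$-centredness of the iterands) and the bound $|\P_\alpha| \le \mu$ for $\alpha < \mu^+$ (from $\mu^{<\kappa} = \mu$ together with the fact that $\USupt$ is bounded below $\mu^+$). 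These together imply that every subset of $\mu$ appearing in $V[\P]$ already appears in some $V[\P_\beta]$, $\beta < \mu^+$. Since $|\mathcal{P}(\kappa)^{V[\P_\alpha]}| \le \mu$ and $\UU^G \cap V[\P_\alpha]$ can be coded as a subset of $\mu$, this restriction must live in some $V[\P_\beta]$ with $\beta > \alpha$. Iterating the closure produces an increasing sequence $\seq{\alpha_\xi}{\xi<\mu^+}$ with $\UU \cap V[\P_{\alpha_\xi}] \in V[\P_{\alpha_{\xi+1}}]$; at limits $\delta$ of cofinality $>\kappa$ the iteration $\P_{\alpha_\delta}$ is a direct limit of $\seq{\P_{\alpha_\xi}}{\xi<\delta}$ and every subset of $\kappa$ in $V[\P_{\alpha_\delta}]$ already appears in some $V[\P_{\alpha_\xi}]$, $\xi<\delta$, whence $\UU \cap V[\P_{\alpha_\delta}] = \bigcup_{\xi<\delta}(\UU \cap V[\P_{\alpha_\xi}]) \in V[\P_{\alpha_\delta}]$. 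Setting $S_{\UU}$ to be the set of such limits gives the desired club, and normality of each $\dot{U}_\alpha$ in $V[\P_\alpha]$ is inherited from $\UU$ by absoluteness.

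Given $S_{\UU}$ and for each $\alpha \in S_{\UU}$ a $\P_\alpha$-name $\dot{U}_\alpha$ for $\UU \cap V[\P_\alpha]$, the conditions $\lot{\alpha}$ are defined directly. For each even $\beta \notin S_{\UU}$ I fix once and for all an arbitrary $\P_\beta$-name for some normal ultrafilter in $V[\P_\beta]$; this bookkeeping is needed because $\USupt$ must be downwards-closed and hence filled in at every even $\beta$. For each $\alpha \in S_{\UU}$ I then set $\lot{\alpha}$ to be the condition with $\SSupt(\lot{\alpha}) = \emptyset$, $\USupt(\lot{\alpha}) = \alpha \cap \EVEN$ and $\lot{\alpha}(\beta) = (\dot{U}_\beta, 1_{\dot{U}_\beta})$ for every even $\beta < \alpha$; this is a legitimate condition in $\P_\alpha$ of the lot-type shape.

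Items (i) and (ii) of the lemma are then read off immediately: by the definition of the lottery at stage $\alpha$, the condition $\lot{\alpha^*}$ (for any $\alpha < \alpha^* \in S_{\UU}$) selects the ultrafilter $\dot{U}_\alpha$, and by the reflection step $\dot{U}_\alpha = \UU \cap V[\P_\alpha]$. For continuity (iii) at a limit $\delta \in S_{\UU}$ of cofinality $>\kappa$, the crucial point is that the $\dot{U}_\beta$ chosen at each even $\beta < \delta$ was fixed in advance, independently of the enclosing stage, so the conditions $\lot{\beta}$ for $\beta \in S_{\UU} \cap \delta$ form a coherent descending family. Since $\USupt(\lot{\delta}) = \delta \cap \EVEN = \bigcup_{\beta<\delta}\USupt(\lot{\beta})$ and $\SSupt$ is empty throughout, $\lot{\delta}$ is precisely the infimum of this family, as required.
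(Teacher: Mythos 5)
There is a genuine gap, and it sits exactly where the real work of the lemma lies. Your reflection step rests on the claim that $\P$ itself is $\mu^+$-cc (via a $\Delta$-system argument on the $<\kappa$-sized supports plus $\kappa$-centredness of the iterands), so that every subset of $\mu$ in $V[\P]$ is caught in some $V[\P_\beta]$. But that chain-condition argument only applies \emph{below} a condition of lot-type, where the lottery choices at the even coordinates are already fixed: two conditions that select different normal ultrafilters at a common coordinate of their (downward closed) ultrafilter supports are incompatible, and there may be far more than $\mu$ normal ultrafilters available at a stage, so the full $\P$ has antichains of size $>\mu$ --- the paper states explicitly that $\P_\delta$ fails to be $\kappa^+$-cc for exactly this reason, and only $\P_\delta \da \lot{\delta}$ is Knaster. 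You cannot repair this by working below a lot-type condition chosen in advance, because selecting the lottery entries compatibly with $\UU$ is precisely what the lemma is supposed to produce; that is circular.

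Even if a catching lemma were available, your argument only shows that, for every generic $G$, the trace $\UU^G \cap V[G_{\alpha_\xi}]$ lies in some \emph{later} model $V[G_{\alpha_{\xi+1}}]$, whereas the lemma requires a specific condition $\lot{\alpha}$ forcing the \emph{same-stage} statement $\UU \cap V[\P_\alpha] \in V[\P_\alpha]$. Your limit step does not close this gap: at $\delta = \alpha_\zeta$ of cofinality $>\kappa$ the union $\bigcup_{\xi<\zeta} \bigl(\UU \cap V[\P_{\alpha_\xi}]\bigr)$ is extensionally the right set, but each piece is merely \emph{some} element of $V[\P_{\alpha_\delta}]$, and the assignment $\xi \mapsto \UU^G \cap V[G_{\alpha_\xi}]$ is computed from the full generic $G$; nothing you say puts this sequence, or its union, inside $V[\P_{\alpha_\delta}]$. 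The paper's proof supplies exactly the missing mechanism: at successor steps it invokes \cite[Lemma 10]{F:u}, a recursive construction of $\lot{\alpha_{\xi+1}}$ that diagonalizes over maximal antichains and nice $\P_{\alpha_{\xi+1}}$-names $\dot{x}$ for subsets of $\kappa$ so that the conditions of $\P_{\alpha_{\xi+1}} \da \lot{\alpha_{\xi+1}}$ deciding $\dot{x} \in \UU$ are predense below $\lot{\alpha_{\xi+1}}$; then the trace is computable from $G_{\alpha_{\xi+1}}$ alone, the name $\dot{U}_{\alpha_{\xi+1}}$ for it can be inserted into the lottery (giving item (ii)), and at limits of cofinality $>\kappa$ the infimum condition works because every nice name for a subset of $\kappa$ appears at an earlier stage, so the deciding conditions remain predense. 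Without this condition-by-condition decision procedure, your claims that (i) and (ii) can be ``read off immediately'' do not follow: that the ultrafilter selected at stage $\alpha$ is forced to \emph{equal} $\UU \cap V[\P_\alpha]$ is the content of the diagonalization, not a consequence of the definition of the lottery.
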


\begin{proof}
The construction of $S_{\UU} = \set{\alpha_\xi}{\xi < \mu^+}$ is by transfinite recursion, repeating at successor stages the proof of \cite[Lemma 10]{F:u}: if $\alpha_\xi$ is constructed, Lemma 10 yields $\alpha_\xi < \alpha_{\xi+1}$ and $\lot{\alpha_{\xi+1}}$ such that $\lot{\alpha_{\xi+1}}$ forces $\dot{U} \cap V[\P_{\alpha_{\xi+1}}] \in V[\P_{\alpha_{\xi+1}}]$. The argument for this stage is itself a recursive construction which diagonalizes over maximal antichains and nice names $\dot{x}$ for subsets of $\kappa$ deciding whether $\dot{x}$ belongs to $\dot{U}$ or not (the set of conditions in $\P_{\alpha_{\xi+1}} \da \lot{\alpha_{\xi+1}}$ which decide $\dot{x} \in \dot{U}$ is predense below $\lot{\alpha_{\xi+1}}$). To address (\ref{fix2*}), define an extension $\lot{\alpha_{\xi+1}+1} = \lot{\alpha_{\xi+1}} \conc p(\alpha_{\xi+1})$, where $p(\alpha_{\xi+1})$ is of the form $\la \dot{U}_{\alpha_{\xi+1}}, 1_{\dot{U}_{\alpha_{\xi+1}}} \ra$, where $\dot{U}_{\alpha_{\xi+1}}$ is a name for $\dot{U} \cap V[\P_{\alpha_{\xi+1}}]$. 

If $\seq{\alpha_\xi}{\xi < \zeta}$ is constructed and $\zeta$ has cofinality $>\kappa$, set $\alpha_\zeta = \sup\set{\alpha_\xi}{\xi < \zeta}$ and let $\lot{\alpha_\zeta}$ be the infimum of $\seq{\lot{\alpha_\xi}}{\xi<\zeta}$. If $\dot{x}$ is a nice name for a subset of $\kappa$ in $V[\P_{\alpha_\zeta}\da \lot{\alpha_\zeta}]$, it belongs to an initial segment $V[\P_{\alpha_\eta}\da \lot{\alpha_\eta}]$ for some $\eta < \zeta$ since $\zeta$ has cofinality $>\kappa$. It follows that the set of conditions in $\P_{\alpha_{\eta+1}} \da \lot{\alpha_{\eta+1}}$ which decide whether $\dot{x}$ is in $\dot{U}$ is predense below $\lot{\alpha_{\eta+1}}$, and hence also below $\lot{\alpha_\zeta}$. Thus $\lot{\alpha_\zeta}$ forces  $\dot{U} \cap V[\P_{\alpha_\zeta}] \in V[\P_{\alpha_\zeta}]$. It follows that $S_{\UU}$ is continuous at limit stages of cofinality $>\kappa$. 
\end{proof}

\subsection{Obtaining a suitable  normal ultrafilter for the proof}\label{sec:right}

The existence of normal ultrafilters in $V[\P]$ follows by assuming that $\kappa$ is Laver-indestructibly supercompact. In order to find a suitable name $\dot{U}$ for which the iteration on $I_0$ -- below some well-chosen condition $p_{\UU}$ -- generates a base of a uniform ultrafilter and ensures the desired value of $\mf{u}(\kappa)$, one needs to go into the details of the Laver preparation. We follow the structure of the argument in \cite{F:u} while we additionally include the forcings $\M(\kappa,\lambda)$ and $\Add(\kappa,\lambda)$ in the preparation (we also provide a clarification of certain points in the proof; see Footnote \ref{ft:un1}). 

In preparation for Theorems \ref{th:P_1} and \ref{th:P_2}, we need to consider not just the forcing $\P$ from Definition \ref{def:P}, but forcings of the form \beq \label{eq:modM} \M(\kappa,\lambda) * \dot{\P}\eeq and \beq \label{eq:modA} \Add(\kappa,\lambda)*\dot{\P},\eeq where $\dot{\P}$ is a name for a forcing from Definition \ref{def:P}. The argument is the same in both cases, so let us write this forcing as \beq \label{eq:mod} P * \dot{\P},\eeq with the understanding that $P$ is either $\Add(\kappa,\lambda)$ or $\M(\kappa,\lambda)$.

Suppose $\kappa$ is supercompact in some universe $V'$, and $L$ is the Laver preparation for $\kappa$ which makes $\kappa$ in $V'[L]$ indestructible under all $\kappa$-directed closed forcings. Let $H*F*G$ be an $L*\dot{P}*\dot{\P}$-generic over $V'$, where $\dot{P}$ is an $L$-name for $\Add(\kappa,\lambda)$ or $\M(\kappa,\lambda)$, and $\dot{\P}$ is an $L*\dot{P}$-name for $\P$ in Definition \ref{def:P}.

Fix a suitable supercompact embedding \beq j: V' \to M \eeq with critical point $\kappa$ such that in $M$ the iteration $j(L)$ on $\kappa+1$ is equal to $L*\dot{P}*\dot{\P}$.

Using a standard master condition argument we can lift in $V'[H][F][G]$ further to $j'$ \beq \label{c0} j':V'[H][F] \to M[H][F][G][H^*][F^*],\eeq where $H^*$ is any generic over $M[H][F][G]$ for the tail of $j(L)$ and $F^*$ is any generic over $M[H][F][G][H^*]$ containing a master condition for $F$.

For the lifting of $j'$ to $j^*$, we need a specific master condition $p^*$ (see Remark \ref{rm:p*} for details) extending the pointwise image of $j'{}"G$. Since $p^*$ is a master condition, $j'$ lifts to $j^*$: \beq \label{c1} j^*: V'[H][F][G] \to M[H][F][G][H^*][F^*][G^*], \eeq where $G^*$ contains $p^*$. 

In $V'[H][F]$, let $\UU$ be a $\P$-name forced by $1_{\P}$ to be a normal ultrafilter on $\kappa$ generated by $j^*$. This name $\UU$ is the one to which Lemma \ref{lm:fix1} is applied, and which determines the ordinal $\delta \in S_{\dot{U}}$, the condition $p_{\dot{U}} = \lot{\delta}$, and finally the forcing $\P_\delta \da p_{\dot{U}}$.\footnote{\label{ft:un1} The argument in \cite[Theorem 12]{F:u} seems to suggest that one can start with an arbitrary name $\dot{U}$ and apply \cite[Lemma 10]{F:u} with it, and control the interpretation of $\dot{U}$ by choosing the right master condition to lift $j'$ to $j^*$ in (\ref{c1}). But $\UU$ is a $\P$-name and the lifting from $j'$ to $j^*$ does not effect its interpretation which is fixed by the $\P$-generic filter $G$ (and hence it is not clear why $\UU^G$ should contain the Mathias generic subsets of $\kappa$ which ensure a base of size $\kappa^*$). Instead, we argue that $1_{\P}$ forces over $V'[H][F]$ that there is a normal ultrafilter $\UU$ with the required properties (*) reviewed in Remark \ref{rm:p*}, apply Lemma \ref{lm:fix1} with this name $\dot{U}$ to secure a condition $p_{\dot{U}}$ of the form $\lot{\delta}$, and only then choose a generic filter $G$ for $\P_\delta$ which contains the condition $p_{\dot{U}}$.}

\brm \label{rm:p*}
Let us briefly review the definition of $p^*$. We work in the generic extension $V'[H][F][G]$, but all we say can be translated into $\P$-forcing statements dealing with names for $j'$ and $G$ over $V'[H][F]$. Let $p_0^*$ be some master condition for $j'[G]$; we extend $p_0^*$ by some $p^* \le p_0^*$ which in addition has the following property (*):

\medskip
\begin{itemize}
\item[(*)] Whenever $\alpha < \mu^+$ has the property that for every $A \in (\UU_\alpha)^G$ (where $\UU_\alpha$ is a name for the normal ultrafilter selected by the lottery at stage $\alpha$ by a condition in $G$) there is some name $\dot{A}$ for $A$ and a condition $p_A \in G_\alpha$ with $j'(p_A) \Vdash \kappa \in j'(\dot{A})$, then $p^*(j(\alpha))$ is obtained from $p^*_0(j(\alpha))$ by adding $\{\kappa\}$ into the stem of the Mathias forcing at stage $j(\alpha)$.
\end{itemize}

\medskip
Let $j^*$ be the lifting of $j'$ with any $G^*$ which contains $p^*$, and let $\dot{U}$ be a name for the normal ultrafilter generated by $j^*$. Let us apply Lemma \ref{lm:fix1} with this name $\dot{U}$, obtaining  $S_{\dot{U}}$ and the condition $\lot{\delta}= p_{\dot{U}}$ for an appropriate $\delta$. Let $G$ be any $\P_{\delta} \da p_{\dot{U}}$-generic filter. The Mathias-generic subsets of $\kappa$, denoted $x_\alpha$, at all stages $\alpha \in S_{\UU}$ are in $\UU^G$ because $p_{\dot{U}} \in G$ and $p^*(j(\alpha))$ was defined to contain the stem $x_\alpha \cup \{\kappa\}$ (and $\kappa \in j^*(x_\alpha)$ is equivalent to $x_\alpha$ being in $\UU^G$).
\erm

\section{Compactness and generalized cardinal invariants}\label{sec:main}

Using the indestructibility result for $\SR(\kappa^{++})$ reviewed in Section \ref{sec:prelim} and the properties of the forcing $\P_\delta \da p_{\UU}$ reviewed earlier, one can immediately observe the following (see Definitions \ref{def:u} and \ref{def:t} for $\uu$ and $\mf{t}(\kappa)$ and \cite{F:u} for the remaining cardinal invariants):

\begin{theorem}\label{th:P_1}
Suppose $\kappa$ is a supercompact cardinal, $\lambda > \kappa$ is a weakly compact cardinal, $\mu \ge \lambda$ is a cardinal with cofinality $>\kappa$ with $\mu^\kappa = \mu$, and $\kappa^*$ is a regular cardinal with $\kappa < \kappa^* < \mu$. Then there is a generic extension $V[G]$ which satisfies the following:
\bce[(i)]
\item Exactly the cardinals in the open interval $(\kappa,\lambda)$ are collapsed, with $\lambda = (\kappa^{++})^{V[G]}$,
\item $2^\kappa = \mu$,
\item $\SR(\kappa^{++})$.
\ece
\medskip
And the following identities hold:
\begin{multline}\label{eq:ci1}
\kappa^* = \mf{p}(\kappa) = \mf{t}(\kappa) = \mf{b}(\kappa) = \mf{d}(\kappa) = \mf{s}(\kappa) = \mf{r}(\kappa) = \mf{a}(\kappa) =  \mf{u}(\kappa) = \\ = \mx{cov}(\mathcal M_\kappa)
= \mx{add}(\mathcal M_\kappa) = \mx{non}(\mathcal M_\kappa) = \mx{cof}(\mathcal M_\kappa).
\end{multline}
\end{theorem}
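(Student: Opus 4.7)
My plan is to force over a Laver-indestructible ground model with the two-step iteration $\P_1 := \M(\kappa,\lambda) * \dot{\P}_\delta \da p_{\UU}$ described in the introduction, and verify the three structural clauses and the cardinal-invariant equalities separately. First, I would assume without loss of generality that $\kappa$ is Laver-indestructibly supercompact in $V$; this can be arranged by the Laver preparation $L$ applied in the hypothesized universe $V'$, taking $V := V'[H]$ for $H$ a generic for $L$, while $\lambda$ retains its weak compactness and $\mu$ its arithmetical properties.

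Next, forcing $\M(\kappa,\lambda)$ over $V$ produces an intermediate model $V[F]$ in which $\lambda = \kappa^{++}$ with exactly the cardinals in $(\kappa,\lambda)$ collapsed, $2^\kappa = \lambda$, and $\SR(\kappa^{++})$ holds by Fact \ref{f:M}. Crucially, $\kappa$ remains supercompact in $V[F]$ because by (\ref{eq:M1}) $\M(\kappa,\lambda)$ is a projection of $\Add(\kappa,\lambda) \times R^1$, which is $\kappa$-directed closed. In $V[F]$ I would then carry out the construction of Section \ref{sec:review}: define the iteration $\P$ of length $\mu^+$ following Definition \ref{def:P} (with the forcings on $I_1, I_2$ chosen as in \cite[Theorem~35]{F:u} to control $\mf{a}(\kappa)$ and $\mf{p}(\kappa)$), produce the $\P$-name $\UU$ for a normal ultrafilter by lifting a suitable supercompactness embedding as in Remark \ref{rm:p*}, invoke Lemma \ref{lm:fix1} to extract the $\cof(\bigger\kappa)$-club $S_{\UU}$, and truncate at an ordinal $\delta \in S_{\UU} \cap (\mu,\mu^+)$ of cofinality $\kappa^*$ to obtain the condition $p_{\UU} = \lot{\delta}$. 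The resulting forcing $\P_\delta \da p_{\UU}$ is $\kappa^+$-Knaster with a dense subset of size $\mu$.

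Finally I would force with $\P_\delta \da p_{\UU}$ over $V[F]$ to obtain the desired extension $V[G]$. Clause (i) then holds because $\P_\delta \da p_{\UU}$ is $\kappa^+$-cc, so it preserves all cardinals $\geq \kappa^+$ and thus does not alter the cardinal structure of $V[F]$. Clause (ii), $2^\kappa = \mu$, follows from a standard nice-name count combined with the Mathias generics on $I_0$ producing $\mu$ new subsets of $\kappa$. Clause (iii), $\SR(\kappa^{++})$, is immediate from Fact \ref{f:sr} applied to $\P_\delta \da p_{\UU}$ over $V[F]$. For the equalities in (\ref{eq:ci1}), I would appeal to the analysis of \cite{F:u}: below $p_{\UU}$ the iteration forces $\uu \leq \cf(\delta) = \kappa^*$ because the Mathias generics enumerated along $S_{\UU}$ form a base of size $\kappa^*$ for $\UU^G$, while the $\kappa$-directed closure of the iteration together with $\cf(\delta) \geq \kappa^*$ yields the lower bound $\kappa^* \leq \mf{p}(\kappa)$, and the remaining invariants are then pinched between these by the $\ZFC$ inequalities recalled in Subsection \ref{sec:prelim}.

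The main obstacle is to verify that the computations of \cite{F:u} survive the interposition of $\M(\kappa,\lambda)$ between the Laver preparation and the iteration $\P$; this is precisely what Section \ref{sec:review} is arranged to handle for composites of the form $P * \dot{\P}$ with $P = \M(\kappa,\lambda)$. The delicate point is the master-condition construction of Remark \ref{rm:p*}: the condition $p^*$ must simultaneously be a master condition for $j'$ on $G$ and force every Mathias generic $x_\alpha$ at stages $\alpha \in S_{\UU}$ into $\UU$, so that $\UU^G$ really does have the small base of size $\kappa^*$ generated by those Mathias generics. Once this is secured, the remaining steps combine Facts \ref{f:M} and \ref{f:sr} with the $\kappa^+$-Knaster property of $\P_\delta \da p_{\UU}$ and the cardinal-invariant analysis of \cite{F:u}.
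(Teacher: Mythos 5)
Your proposal is correct and follows essentially the same route as the paper: force with $\P_1=\M(\kappa,\lambda)*\dot{\P}_\delta\da p_{\UU}$, get $\SR(\kappa^{++})$ from Fact \ref{f:M} plus the indestructibility Fact \ref{f:sr} (using that $\P_\delta\da p_{\UU}$ is $\kappa^+$-Knaster, hence $\kappa^+$-cc), and defer the computation of the invariants to \cite{F:u} via Lemma \ref{lm:fix1} and Remark \ref{rm:p*}. Two side justifications are shaky, though neither is load-bearing given your appeal to Section \ref{sec:review}: supercompactness of $\kappa$ in $V[F]$ does not follow from $\M(\kappa,\lambda)$ being a projection of the $\kappa$-directed closed $\Add(\kappa,\lambda)\x R^1$ (large-cardinal properties need not pass to intermediate models); either observe that $\M(\kappa,\lambda)$ is itself $\kappa$-directed closed, or, as the paper does, fix the embedding already in $V'$ with the Laver preparation anticipating the whole composite $\M(\kappa,\lambda)*\dot{\P}$, so no separate preservation step over $V[F]$ is needed. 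Likewise, $\kappa^*\le\mf{p}(\kappa)$ is not a consequence of $\kappa$-directed closure plus $\cf(\delta)=\kappa^*$ alone; it comes from the bookkeeping on the coordinates $I_2$ together with the reflection of any $<\kappa^*$-sized SIP family into some initial segment $V[F][G_\alpha]$, $\alpha<\delta$, exactly as in \cite{F:u}.
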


\begin{proof}

Let us define \beq \label{eq:P1}\P_1 := \M(\kappa,\lambda) * \dot{\P}_\delta \da p_{\UU},\eeq where $\dot{\P}_\delta \da p_{\UU}$ is defined in $V[\M(\kappa,\lambda)]$ following the review in Section \ref{sec:orig} with the condition $p_{\UU}$ determined by Lemma \ref{lm:fix1} with respect to a name $\dot{U}$ for a normal ultrafilter on $\kappa$ in $V[\M(\kappa,\lambda)*\dot{\P}]$ obtained through the Laver preparation anticipating the forcing (\ref{eq:modM}), and the construction described in Remark \ref{rm:p*}. In particular $\delta \in (\mu,\mu^+)$ is such that $S_{\dot{U}} \cap \delta$ has cofinality $\kappa^*$ and $p_{\UU}$ is equal to $\lot{\delta}$ in the construction in Lemma \ref{lm:fix1} applied with $\dot{U}$.

Let $G = F * G_\delta$ be an $\M(\kappa,\lambda)* \dot{\P}_\delta \da p_{\UU}$-generic filter.

The principle $\SR(\kappa^{++})$ holds in $V[F]$ by Fact \ref{f:M} and by Fact \ref{f:sr} continues to hold in $V[G]$ because $(\dot{\P}_\delta \da p_{\UU})^F$ is $\kappa^+$-cc in $V[F]$.

The desired pattern of the cardinal invariants follows exactly as in \cite{F:u}.
\end{proof}

We may obtain $\SR(\kappa^{++})$ in a different way, and in addition have also $\TP(\kappa^{++})$ and $\neg \wKH(\kappa^+)$, if we modify forcing $\P_1$ from (\ref{eq:P1}). This modification results in a different pattern of cardinal invariants:

\begin{theorem}\label{th:P_2}
Suppose $\kappa$ is a supercompact cardinal, $\lambda > \kappa$ is a weakly compact cardinal, $\mu \ge \lambda$ is a cardinal with cofinality $>\kappa$ with $\mu^\kappa = \mu$, and $\kappa^*$ is a regular cardinal with $\kappa < \kappa^* < \mu$. Then there is a generic extension $V[G]$ which satisfies the following:
\bce[(i)]
\item Exactly the cardinals in the open interval $(\kappa,\lambda)$ are collapsed, with $\lambda = (\kappa^{++})^{V[G]}$,
\item $2^\kappa = \mu$,
\item $\SR(\kappa^{++})$, $\TP(\kappa^{++})$ and $\neg \wKH(\kappa^+)$.
\ece
\medskip
And the following hold:
\begin{multline}\label{eq:ci12}
\kappa^+ = \mf{p}(\kappa) = \mf{t}(\kappa) \le \\ \le \kappa^* = \mf{b}(\kappa) = \mf{d}(\kappa) = \mf{s}(\kappa) = \mf{r}(\kappa) = \mf{a}(\kappa) =  \mf{u}(\kappa) = \mf{r}(\kappa) = \\ = \mx{cov}(\mathcal M_\kappa)
= \mx{add}(\mathcal M_\kappa) = \mx{non}(\mathcal M_\kappa) = \mx{cof}(\mathcal M_\kappa).
\end{multline}
\end{theorem}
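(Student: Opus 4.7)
The plan is to set
$$
\P_2 := \Add(\kappa,\lambda) * (\dot{\P}_\delta \da p_{\UU} \x \dot{R}),
$$
where $\dot{R}$ is the $\kappa^+$-distributive quotient of $\M(\kappa,\lambda)$ over $\Add(\kappa,\lambda)$ from (\ref{eq:M2}), and $\dot{\P}_\delta \da p_{\UU}$ is defined in $V[\Add(\kappa,\lambda)]$ following Section \ref{sec:right} with $P = \Add(\kappa,\lambda)$ in (\ref{eq:mod}): we run the Laver preparation anticipating the composition (\ref{eq:modA}), extract via Remark \ref{rm:p*} a $\P$-name $\dot{U}$ for a normal ultrafilter on $\kappa$ in $V[\Add(\kappa,\lambda) * \dot{\P}]$, and apply Lemma \ref{lm:fix1} to choose $\delta \in (\mu,\mu^+)$ with $\cf(\delta) = \kappa^*$ together with its associated condition $p_{\UU}$. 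Since $\dot{\P}_\delta \da p_{\UU}$ is an $\Add(\kappa,\lambda)$-name, one can rewrite $\P_2 \equiv \M(\kappa,\lambda) * \dot{\P}_\delta \da p_{\UU}$ with the second factor viewed as an $\M(\kappa,\lambda)$-name through the regular embedding $\Add(\kappa,\lambda) \hookrightarrow \M(\kappa,\lambda)$.

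Clauses (i) and (ii) then follow routinely: $\M(\kappa,\lambda)$ collapses exactly the cardinals in $(\kappa,\lambda)$ and yields $\lambda = \kappa^{++}$ and $2^\kappa = \lambda$, while the further $\kappa$-directed closed, $\kappa^+$-Knaster forcing $\dot{\P}_\delta \da p_{\UU}$ of size at most $\mu$ preserves cardinals and lifts $2^\kappa$ to $\mu$. The compactness principles in (iii) reduce to the indestructibility results already established: $\SR(\kappa^{++})$ is forced by $\M(\kappa,\lambda)$ (Fact \ref{f:M}) and preserved by the $\kappa^+$-cc forcing $\dot{\P}_\delta \da p_{\UU}$ (Fact \ref{f:sr}); $\TP(\kappa^{++})$ follows immediately from Fact \ref{f:tp} applied to $\dot{\Q} := \dot{\P}_\delta \da p_{\UU}$; and $\neg \wKH(\kappa^+)$ follows from Theorem \ref{th:wKH} with the same $\dot{\Q}$, whose productive $\kappa^+$-cc in composition with $\Add(\kappa,\lambda)$ is supplied by Lemma \ref{lm:sq}(iii) --- $\Add(\kappa,\lambda)$ being $\kappa^+$-Knaster (a standard $\Delta$-system argument using $\kappa^{<\kappa} = \kappa$) and $\dot{\Q}$ being forced to be $\kappa^+$-Knaster.

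For the cardinal invariants in the second line of (\ref{eq:ci12}) ($\mathfrak{b}(\kappa)$ through the meager-ideal invariants), the plan is to observe that each is already equal to $\kappa^*$ in the intermediate model $V[\Add(\kappa,\lambda) * \dot{\P}_\delta \da p_{\UU}]$, by a direct transcription of the \cite{F:u} computations, whose arguments depend only on the Laver preparation and on the structure of $\P_\delta \da p_{\UU}$, both of which are valid when the base model is $V[\Add(\kappa,\lambda)]$ instead of $V$. Since $\dot{R}$ is $\kappa^+$-distributive (and therefore adds no new subsets of $\kappa$), all these invariants are preserved unchanged in $V[G]$.

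The main obstacle, and the conceptually new observation separating $\P_2$ from $\P_1$, is the identity $\mathfrak{p}(\kappa) = \mathfrak{t}(\kappa) = \kappa^+$ in $V[G]$. Since $\kappa^+ \le \mathfrak{p}(\kappa) \le \mathfrak{t}(\kappa)$ holds in $\ZFC$, it suffices to exhibit a tower of length $\kappa^+$ in $V[G]$. The plan is to show that $\dot{R}$ itself introduces such a tower over $V[\Add(\kappa,\lambda)]$, hence also over $V[\M(\kappa,\lambda)]$: using the $\Add(\kappa^+,1)$ building blocks inside the Mitchell quotient, one constructs by recursion a $\sub^*$-descending $\kappa^+$-sequence $\seq{T_\xi}{\xi < \kappa^+}$ of subsets of $\kappa$ drawn from $V[\Add(\kappa,\lambda)]$, with successor extensions resolved by bits of a suitable $\Add(\kappa^+,1)$-generic inside $\dot{R}$, and with a density argument showing that no $V[\Add(\kappa,\lambda)]$-set can be a pseudo-intersection. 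Because $\dot{R}$ adds no new $\kappa$-subsets, the same conclusion transfers to $V[\M(\kappa,\lambda)]$, and the tower then survives the further $\kappa^+$-cc forcing $\dot{\P}_\delta \da p_{\UU}$ by the standard argument that any name for a pseudo-intersection is supported by an antichain of size at most $\kappa$ and is defeated generically against $\kappa^+$ many members of the tower. Making the tower construction and the density argument precise --- in particular, isolating the relevant $\Add(\kappa^+,1)$-factor inside $\dot{R}$ and verifying genericity of the $\kappa^+$-sequence of extensions --- is the technical heart of the proof.
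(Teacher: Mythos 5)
Your setup of $\P_2$, the rewriting $\P_2 \equiv \M(\kappa,\lambda) * \dot{\P}_\delta \da p_{\UU}$, and the derivation of (i)--(iii) from Facts \ref{f:M}, \ref{f:sr}, \ref{f:tp} and Theorem \ref{th:wKH} (with productive $\kappa^+$-cc via Knasterness) match the paper. The gaps are in the cardinal-invariant computations. First, your claim that since $\dot{R}$ is $\kappa^+$-distributive ``all these invariants are preserved unchanged in $V[G]$'' is false as a general principle, and your own theorem refutes it: $R$ adds no new subsets of $\kappa$, yet it drops $\mf{t}(\kappa)$ from $\kappa^*$ to $\kappa^+$, precisely by adding a \emph{new family of old sets}. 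Distributivity only preserves the upper bounds $\le\kappa^*$ (old witnesses survive); the lower bounds $\kappa^*\le\mf{b}(\kappa),\mf{s}(\kappa),\mx{cov}(\mathcal M_\kappa),\dots$ in $V[G]$ need a separate argument. The paper supplies it: any family of size $<\kappa^*$ in $V[G]$ consists of old elements of ${}^\kappa\kappa$, hence (since $\cf(\delta)=\kappa^*$ and the iteration is $\kappa^+$-cc) is contained in some $V[F_0][G_\alpha]$, $\alpha<\delta$, and is then defeated by a later Mathias (resp.\ Cohen) generic; the remaining equalities follow from the $\ZFC$ inequalities (\ref{eq1})--(\ref{eq5}).

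Second, your route to $\mf{p}(\kappa)=\mf{t}(\kappa)=\kappa^+$ does not work as described. You add the tower by $R$ over $V[\Add(\kappa,\lambda)]$ and then claim it survives $\dot{\P}_\delta\da p_{\UU}$ ``by the standard argument'' that a $\kappa^+$-cc forcing cannot add a pseudo-intersection; there is no such argument --- a $\kappa$-centered (hence $\kappa^+$-cc) generalized Mathias forcing relative to the filter generated by a $\kappa^+$-tower adds a pseudo-intersection to it, and the iteration $\P_\delta\da p_{\UU}$ consists exactly of such Mathias-type iterands and is engineered in \cite{F:u} to force $\mf{p}(\kappa)=\kappa^*>\kappa^+$, so preservation of an early-added tower is genuinely in doubt, not merely unproved. (Also, a hand-built density argument over $V[\Add(\kappa,\lambda)]$ only excludes pseudo-intersections lying in $V[\Add(\kappa,\lambda)]$, whereas $G_\delta$ adds many new subsets of $\kappa$.) The paper avoids preservation altogether by adding the tower \emph{last}: working over $V[F_0][G_\delta]$, where $\mf{t}(\kappa)=\kappa^*>\kappa^+$ licenses the embedding $\varphi$ of Fact \ref{f:SS}, it shows (Lemma \ref{lm:tower}) that $R$ adds a branch through $(2^{<\kappa^+})^{V[F_0|\alpha]}$ that is new over $V[F_0][G_\delta]$ (the tail forcing being $\kappa^+$-Knaster, so no new branches appear in between), and since $R$ is $\kappa^+$-distributive over $V[F_0][G_\delta]$ (Lemma \ref{lm:mutual}(iii)), every candidate pseudo-intersection in $V[G]$ is old and would reconstruct the new branch via (\ref{split}) (Lemma \ref{lm:embed}), a contradiction. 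You would need to adopt this order of argument, or else supply a genuine preservation proof for your tower through $\P_\delta\da p_{\UU}$, which seems much harder.
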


\begin{proof}
Let us define \beq \label{eq:P2} \P_2 := \Add(\kappa,\lambda) * (\dot{\P}_\delta \da p_{\UU} \x \dot{R}),\eeq 
where $\dot{\P}_\delta \da p_{\UU}$ is defined in $V[\Add(\kappa,\lambda)]$ following the review in Section \ref{sec:orig} with the condition $p_{\UU}$ determined by Lemma \ref{lm:fix1} with respect to a name $\dot{U}$ for a normal ultrafilter on $\kappa$ in $V[\Add(\kappa,\lambda)*\dot{\P}]$ obtained through the Laver preparation anticipating the forcing (\ref{eq:modA}), and the construction described in Remark \ref{rm:p*}. In particular $\delta \in (\mu,\mu^+)$ is such that $S_{\dot{U}} \cap \delta$ has cofinality $\kappa^*$ and $p_{\UU}$ is equal to $\lot{\delta}$ in the construction in Lemma \ref{lm:fix1} applied with the name $\dot{U}$. The forcing $\dot{R}$ is forced by $\Add(\kappa,\lambda)$ to be $\kappa^+$-distributive, with $\Add(\kappa,\lambda)*\dot{R}$ being forcing equivalent to $\M(\kappa,\lambda)$ (see Definition \ref{def:M} for more details).

Let $F = F_0 * F_1$ be $\Add(\kappa,\lambda)*\dot{R}$-generic and let $G_\delta$ be $\P_\delta \da p_{\UU} := (\dot{\P}_\delta \da p_{\UU})^{F_0}$-generic over $V[F_0]$. By Lemma \ref{lm:mutual}(ii), $F_0 * (G_\delta \x F_1)$ is $\P_2$-generic and satisfies the following:

\begin{lemma}\label{lm:mutual} The following hold:
\begin{enumerate}[(i)] 
\item $\M(\kappa,\lambda)$ forces $\dot{\P}_\delta \da p_{\UU}$ is productively $\kappa^+$-cc.
\item Suppose $F_1$ is $R = \dot{R}^{F_0}$-generic over $V[F_0]$ and $G_\delta$ is $\P_\delta \da p_{\UU} = (\dot{\P}_\delta \da p_{\UU})^{F_0}$-generic over $V[F_0]$. Then $F_1$ and $G_\delta$ are mutually generic over $V[F_0]$.
\item $F_1$ does not add new $\kappa$-sequences over $V[F_0][G_\delta]$.
\end{enumerate}
\end{lemma}

\begin{proof}
For (i), it suffices to show that $\Add(\kappa,\lambda) \x R^1$ forces $\dot{\P}_\delta \da p_{\UU}$ is productively $\kappa^+$-cc, because there is a projection from $\Add(\kappa,\lambda) \x R^1$ onto $\M(\kappa,\lambda)$ (see (\ref{eq:M1}) for more details). $\Add(\kappa,\lambda) * \dot{\P}_\delta \da p_{\UU}$ is productively $\kappa^+$-cc because it is $\kappa^+$-Knaster, and by Easton's lemma the $\kappa^+$-closed $R^1$ forces $\Add(\kappa,\lambda)*\dot{\P}_\delta \da p_{\UU}$ is still productively $\kappa^+$-cc, and hence $R^1 \x \Add(\kappa,\lambda)$ forces $\dot{\P}_\delta \da p_{\UU}$ is productively $\kappa^+$-cc (compare with Lemma \ref{lm:sq}(i)).

For (ii), suppose $F_1$ is $V[F_0]$-generic for $R$, and $G_\delta$ is $\P_\delta \da p_{\UU}$-generic over $V[F_0]$. It suffices to show that $G_\delta$ is generic over the model $V[F_0][F_1]$ because then $F_1 \x G_\delta$ is a generic filter over $V[F_0]$ for the product $\P_\delta \da p_{\UU} \x R$, and hence $V[F_0][F_1][G_\delta] = V[F_0][G_\delta][F_1]$. By (i), $\P_\delta \da p_{\UU}$ is still $\kappa^+$-cc over $V[F_0][F_1]$, and since $R$ does not add new $\kappa$-sequences of elements over the model $V[F_0]$, it follows that $\P_\delta \da p_{\UU}$ has the same maximal antichains in $V[F_0]$ as it has in $V[F_0][F_1]$, and so $G_\delta$ is generic over the larger model $V[F_0][F_1]$ as well.

For (iii), suppose for contradiction that $x$ is a $\kappa$-sequence in $V[F_0][G_\delta][F_1]$ which is not in $V[F_0][G_\delta]$. This means that there is a $\P_\delta \da p_{\UU}$-name $\dot{x}$ in $V[F_0][F_1]$ such that $\dot{x}^{G_\delta} = x$ in $V[F_0][G_\delta][F_1] = V[F_0][F_1][G_\delta]$ and $\dot{x}$ is not in $V[F_0]$. However, this name is itself a $\kappa$-sequence of elements in $V[F_0]$ because $\P_\delta \da p_{\UU}$ is $\kappa^+$-cc. This is a contradiction because $F_1$ does not add new $\kappa$-sequences over $V[F_0]$.
\end{proof}

\brm
The argument in Lemma \ref{lm:mutual}(iii) actually shows the following more general claim: Suppose $\P$ and $\Q$ are forcing notions with $\P$ being $\kappa^+$-cc and $\Q$ being $\kappa^+$-distributive. Then if $\Q$ forces that $\P$ is $\kappa^+$-cc, then $\P$ forces that $\Q$ is $\kappa^+$-distributive.
\erm

Let us denote the filter $F_0 * (G_\delta \x F_1)$ by $G$. Fact \ref{f:M}, Lemma \ref{lm:mutual}(i),  Facts \ref{f:sr} and \ref{f:tp} and Theorem \ref{th:wKH}  imply that $\SR(\kappa^{++})$, $\TP(\kappa^{++})$ and $\neg \wKH(\kappa^+)$, respectively, hold in $V[G]$.

It remains to check that the required pattern of cardinal invariants in (\ref{eq:ci12}) holds in $V[G]$. Exactly as in Theorem \ref{th:P_1}, the identities (\ref{eq:ci1}) hold in $V[F_0][G_\delta]$. By Lemma \ref{lm:mutual}(iii), the quotient forcing $R$ does not add new sequences of length $\kappa$, so the space ${}^\kappa \kappa$ is the same in $V[F_0][G_\delta]$ and $V[G]$. However, $R$ may add new subsets of $^{\kappa}\kappa$ and consequently change the values of some cardinal invariants. We will argue that this happens only for $\mf{p}(\kappa)$ and $\mf{t}(\kappa)$ (from the list of the invariants we mention) which will be equal to $\kappa^+$ in $V[G]$ disregarding the value of $\kappa^*$ (see Lemma \ref{lm:tower}). 

Let us start by showing that all cardinal invariants in our list except for $\mf{p}(\kappa)$ and $\mf{t}(\kappa)$ (if $\kappa^*>\kappa^+$) continue to have value $\kappa^*$ in $V[G]$:

\begin{lemma}
The following identities hold in $V[G]$:
\begin{multline}\label{eq:vg}
\kappa^* = \mf{b}(\kappa) = \mf{d}(\kappa) = \mf{s}(\kappa) = \mf{r}(\kappa) = \mf{a}(\kappa) =  \mf{u}(\kappa) = \\ \mx{cov}(\mathcal M_\kappa)
= \mx{add}(\mathcal M_\kappa) = \mx{non}(\mathcal M_\kappa) = \mx{cof}(\mathcal M_\kappa).
\end{multline}
\end{lemma}
\begin{proof}

Let us first focus on the invariants $\mf{b}(\kappa), \mf{d}(\kappa), \mf{s}(\kappa), \mf{r}(\kappa), \mf{a}(\kappa), \mf{u}(\kappa)$. We know that they are all equal to $\kappa^*$ in  $V[F_0][G_\delta]$, and this fact is witnessed for each invariant by an appropriate subset of $({}^\kappa \kappa)^{V[F_0][G_\delta]}$ of size $\kappa^*$. Since $({}^\kappa \kappa)^{V[F_0][G_\delta]} = ({}^\kappa \kappa)^{V[G]}$, these witnesses are still relevant and imply $$\mf{b}(\kappa), \mf{d}(\kappa), \mf{s}(\kappa), \mf{r}(\kappa), \mf{a}(\kappa), \mf{u}(\kappa) \le \kappa^* \mbox{ in $V[G]$}.$$
It thus suffices to show $\kappa^* \le \mf{b}(\kappa), \mf{d}(\kappa), \mf{s}(\kappa), \mf{r}(\kappa), \mf{a}(\kappa), \mf{u}(\kappa)$.

As the Mathias generic subsets of $\kappa$ are added cofinally often below $\delta$, we have $$\kappa^* \le \mf{b}(\kappa) \mbox{ and } \kappa^* \le \mf{s}(\kappa)$$ in $V[G]$: If an unbounded or a splitting family $B$ of size $<\kappa^*$ were added by $R$, then because $\delta$ has cofinality $\kappa^*$ in $V[G]$, it would follow \begin{equation} \label{eq:b} V[G] \models B \sub ({}^\kappa \kappa)^{V[F_0][G_\alpha]} \end{equation} for some $\alpha < \delta$ -- but this is impossible because a Mathias generic subset of $\kappa$ added at any stage after $\alpha$ dominates all functions in $V[F_0][G_\alpha]$ and is unsplit by any subset of $\kappa$ in $V[F_0][G_\alpha]$. 

Since $\mf{b}(\kappa) = \kappa^*$, the remaining inequalities $\kappa^* \le \mf{d}(\kappa), \mf{r}(\kappa), \mf{u}(\kappa), \mf{a}(\kappa)$ follow by $\ZFC$ inequalities (\ref{eq1})--(\ref{eq3}) which were proved in \cite{F:u}:
\begin{equation} \label{eq1}
\kappa^+ \le \mf{b}(\kappa) \le \mf{a}(\kappa),
\end{equation}
\begin{equation} \label{eq2}
\mf{b}(\kappa) \le \mf{r}(\kappa) \le \mf{u}(\kappa),
\end{equation}
\begin{equation}\label{eq3}
\mf{b}(\kappa) \le \mf{d}(\kappa).
\end{equation}

Let us now turn to the invariants related to the meager ideal $\mathcal M_\kappa$. 
We first observe that $\P_\delta \da p_{\UU}$ adds a sequence of Cohen generic subsets of $\kappa$ of order-type $\kappa^*$, $\seq{c_\beta}{\beta< \kappa^*}$, which are cofinal in $\delta$. By standard arguments, for every nowhere dense set $A$ in $\kappa^\kappa$ in $V[F_0][G_\delta]$, there is some $\alpha <\kappa^*$ such that for every $\beta>\alpha$, $c_\beta \not \in A$. This implies $\mx{cov}(\mathcal M_\kappa) \ge \kappa^*$ and also $\mx{non}(\mathcal M_\kappa) \le \kappa^*$ (because $\set{c_\beta}{\beta<\kappa^*}$ is seen to be non-meager) in $V[F_0][G_\delta]$. For the proof, see for instance \cite[Proposition 47]{Bat:gen} which works in our case with obvious modifications. We need to argue that both inequalities still hold in $V[G]$. By \cite[Proposition 47]{Bat:gen}, one can work with (closed) nowhere dense sets $A_f$ determined by certain functions $f: 2^{<\kappa} \to 2^{<\kappa}$ (because for every nowhere dense set $D$ there is $f$ such that $D \sub A_f$).\footnote{To avoid a possible confusion: In \cite[Proposition 47]{Bat:gen}, $\set{c_\beta}{\beta< \kappa^+}$ is shown to be non-meager. This is because in that article only the Cohen forcing at $\kappa$ is used, and any $\kappa$-many $A_f$'s have a name which uses only $<\kappa^+$-many Cohen coordinates, which leaves some $c_\beta$, $\beta < \kappa^+$, outside of these $A_f$'s. For our iteration, this product-type argument is not possible (all we can say is that every $A_f$ appears in the iteration by some stage $\alpha < \delta$), so only $\set{c_\beta}{\beta< \kappa^*}$ is seen to be non-meager.}  Since $R$ does not add new $\kappa$-sequences, these functions $f$ and the associated nowhere dense sets $A_f$ are the same in $V[F_0][G_\delta]$ and $V[G]$. Now we use a similar argument which we used to show that $\kappa^*$ is less or equal to $\mf{b}(\kappa)$ in (\ref{eq:b}): If $B$ is a collection of $<\kappa^*$-many nowhere dense sets $A_f$ in $V[G]$, then $B$ is contained as a subset in $V[F_0][G_\alpha]$ for some $\alpha < \delta$, and consequently there is some $\alpha' < \kappa^*$ such that for every $\beta > \alpha'$, $c_\beta \not \in \bigcup B$. This implies that $\bigcup B$ does not cover the whole space and that $\set{c_\beta}{\beta<\kappa^*}$ is still non-meager, and so $\mx{cov}(\mathcal M_\kappa) \ge \kappa^*$ in $V[G]$ and $\mx{non}(\mathcal M_\kappa) \le \kappa^*$ in $V[G]$.

To finish the argument, we use the following inequalities which are provable in $\ZFC$ (see \cite{F:u} and \cite{Bat:gen} for details):

\begin{equation}\label{eq4}
\mx{add}(\mathcal M_\kappa) = \mx{min}\{\mf{b}(\kappa),\mx{cov}(\mathcal M_\kappa)\}, \mx{cof}(\mathcal M_\kappa) = \mx{max}\{\mf{d}(\kappa),\mx{non}(\mathcal M_\kappa)\},
\end{equation}
\begin{equation}\label{eq5}
\mf{b}(\kappa) \le \mx{non}(\mathcal M_\kappa), \mx{cov}(\mathcal M_\kappa) \le \mf{d}(\kappa).
\end{equation}

Then $\kappa^* = \mx{cov}(\mathcal M_\kappa)
= \mx{add}(\mathcal M_\kappa) = \mx{non}(\mathcal M_\kappa) = \mx{cof}(\mathcal M_\kappa)$ holds in $V[G]$ by (\ref{eq4}), (\ref{eq5}), and $\mf{b}(\kappa)= \mf{d}(\kappa) = \kappa^*$.
\end{proof}

Let us now discuss the values of $\mf{p}(\kappa)$ and $\mf{t}(\kappa)$. Since $\mf{p}(\kappa) \le \mf{t}(\kappa)$ follows easily, we will show $\mf{p}(\kappa) = \mf{t}(\kappa) =  \kappa^+$ by arguing in Lemma \ref{lm:tower} that $R$ adds a tower of size $\kappa^+$ (if $\kappa^* > \kappa^+)$. In preparation for Lemma \ref{lm:tower} let us state Fact \ref{f:SS} and prove Lemma \ref{lm:embed} which gives a sufficient condition for adding a tower of size $\kappa^+$.

Recall the definition of a tower in Definition \ref{def:tower} and of the tower number $\mf{t}(\kappa)$ in Definition \ref{def:t}. Recall that we write $A \sub^* B$ for $A,B \in [\kappa]^\kappa$ if $|A \setminus B| < \kappa$.

We will use the following special case of \cite[Main Lemma 2.1]{SS:bt}:

\begin{Fact}\label{f:SS}
Suppose $\kappa = \kappa^{<\kappa}$, $\kappa> \beth_\omega$, and $\mf{t}(\kappa) > \kappa^+$. Then there is an injective map $\varphi: 2^{<\kappa^+} \to [\kappa]^\kappa$ such that for each $\alpha < \kappa^+$ and $f \in 2^{\alpha}$, $\set{\varphi(\restr{f}{\beta})}{\beta < \alpha}$ is reversely well-ordered by $\sub^*$, satisfies SIP, and \begin{equation} \label{split} \varphi(f \conc 0) \cap \varphi(f \conc 1) = \emptyset.\end{equation}
\end{Fact}

Fact \ref{f:SS} gives a sufficient condition for a $\kappa^+$-distributive forcing to add a tower of size $\kappa^+$.

\begin{lemma}\label{lm:embed}
Suppose $\kappa^{<\kappa} = \kappa$, $\kappa > \beth_\omega$ and $\mf{t}(\kappa)>\kappa^+$. Suppose $P$ is a $\kappa^+$-distributive forcing which adds a new cofinal branch to the tree $(2^{<\kappa^+},\sub)$.\footnote{This is the same as saying that $P$ adds a fresh subset of $\kappa^+$, or that it fails to have the $\kappa^+$-approximation property.} Then $P$ adds a tower of size $\kappa^+$ and thus forces $\mf{t}(\kappa) = \kappa^+$.
\end{lemma}

\begin{proof}
In $V$, let $\varphi$ be the mapping from Fact \ref{f:SS}. Suppose $b \in 2^{\kappa^+}$ is a new cofinal branch through $(2^{<\kappa^+},\sub)^V$ in $V[P]$. Since $P$ is $\kappa^+$-distributive, $([\kappa]^\kappa)^V = ([\kappa]^\kappa)^{V[P]}$. Let $\mathcal T_b =  \set{\varphi(\restr{b}{\alpha})}{\alpha<\kappa^+}$. By the properties of $\varphi$, $\mathcal T_b$ is reversely well-ordered by $\sub^*$ and satisfies SIP. Moreover, $\mathcal T_b$ is a tower because it cannot have a pseudo-intersection in $V[P]$: If $A$ is a pseudo-intersection of $\mathcal T_b$, then $\set{\varphi^{-1}(B)}{A \sub^* B, B \in \rng{\varphi}}$ defines the cofinal branch $b$ in the ground model due to (\ref{split}). This is a contradiction, and so such an $A$ cannot exist.
\end{proof}

Note that the assumption $\kappa > \beth_\omega$ for an uncountable $\kappa$ in Fact \ref{f:SS} can be replaced by weaker conditions (see \cite[Main Lemma 2.1]{SS:bt}), and that the argument in Fact \ref{f:SS} and Lemma \ref{lm:embed} also works for $\omega = \kappa$ without additional assumptions.

\brm
Lemma \ref{lm:embed} provides a way of showing $\mf{t}(\kappa)= \kappa^+$ in $V[P]$ for a variety of forcing notions $P$ related to trees of height $\kappa^+$: the tree $(2^{<\kappa^+},\sub)$ embeds many trees of height $\kappa^+$, and adding a new cofinal branch to any of them by a $\kappa^+$-distributive forcing results in $\mf{t}(\kappa)=\kappa^+$. An example of this argument is in Lemma \ref{lm:tower} below.
\erm

Recall we work in the extension $V[F_0][G_\delta]$,  where $F_0$ and $G_\delta$ are generics fixed in the paragraph before Lemma \ref{lm:mutual}. Recall that $R = \dot{R}^{F_0}$, defined also in the paragraph before Lemma \ref{lm:mutual}, is a $\kappa^+$-distributive forcing in $V[F_0]$ and also in $V[F_0][G_\delta]$ by Lemma \ref{lm:mutual}. We use Lemma \ref{lm:embed} to argue that $R$ adds a tower of size $\kappa^+$ over $V[F_0][G_\delta]$ (if $\kappa^* > \kappa^+)$: 

\begin{lemma}\label{lm:tower}
The forcing $R$ adds a tower of size $\kappa^+$ over $V[F_0][G_\delta]$ if $\kappa^*>\kappa^+$. It follows $V[G]$ satisfies:
\begin{equation}
\kappa^+ = \mf{p}(\kappa) = \mf{t}(\kappa).
\end{equation}
\end{lemma}

\begin{proof} Since $\mf{p}(\kappa) \le \mf{t}(\kappa)$ is always true, it suffices to prove $\mf{t}(\kappa) = \kappa^+$. For $\alpha < \lambda$, let $F_0|\alpha$ denote the restriction of $F_0$ to $\Add(\kappa,\alpha)$. By Definition \ref{def:M} of $\M(\kappa,\lambda)$,\footnote{\label{ft:M} It is easy to check directly that the mapping which sends $(p,q) \in \Add(\kappa,\alpha) * \dot{\Add}(\kappa^+,1)$ to $(p,q')$ where $q'$ is a function with domain $\{\alpha\}$ and $q'(\alpha) = q$ is a complete embedding.} there is a complete  embedding $i_\alpha$: $$i_\alpha: \Add(\kappa,\alpha) * \dot{\Add}(\kappa^+,1) \to \M(\kappa,\lambda)$$ for every $\alpha \in \mathcal A$.  Fix any such $\alpha$. Let $T$ denote the tree $(2^{<\kappa^+},\sub)$ in $V[F_0|\alpha]$ of height $\kappa^+$. Since $\Add(\kappa,\alpha)$ forces the tail of the iteration $\Add(\kappa,[\alpha, \lambda)) * \dot{\P}_\delta \da p_{\dot{U}}$ to be $\kappa^+$-Knaster, $T$ has the same cofinal branches in $V[F_0|\alpha]$ as it has in $V[F_0][G_\delta]$.

Let us now work in $V[F_0][G_\delta]$. If $\kappa^* = \kappa^+$, then $\mf{t}(\kappa) = \kappa^+$, and there is nothing to prove because $V[G]$ will still satisfy $\mf{t}(\kappa) = \kappa^+$. So assume $\mf{t}(\kappa) = \kappa^*> \kappa^+$. Let $\varphi$ be the mapping from Fact \ref{f:SS} from $\tilde{T} = (2^{<\kappa^+})^{V[F_0][G_\delta]}$ to $([\kappa]^\kappa)^{V[F_0][G_\delta]}$. Note that $T$ is a subtree of $\tilde{T}$.

Due to the existence of $i_\alpha$, the generic filter $F_1$ for the $\kappa^+$-distributive quotient forcing $R$ adds over $V[F_0][G_\delta]$ a new cofinal branch through $T$ determined by the generic filter for the forcing $\Add(\kappa^+,1)^{V[F_0|\alpha]}$ (which is induced by $F_1$ and which adds a new cofinal branch to $T$). Now the result follows by Lemma \ref{lm:embed} applied in $V[F_0][G_\delta]$ with $R = P$, using the fact that a new cofinal branch through $T$ is a new cofinal branch through $\tilde{T}$ as well.
\end{proof}

This ends the proof of Theorem \ref{th:P_2}.
\end{proof}

\section{Open questions and further results}\label{sec:open}

We end the article with some open questions and further results.

\medskip

\textbf{Question 1.} 
The indestructibility results for the tree property at $\kappa^{++}$ in Fact \ref{f:tp} and for the negation of the weak Kurepa Hypothesis at $\kappa^+$ in Theorem \ref{th:wKH} only work for the iteration $\P_2$ in Theorem \ref{th:P_2}. At the moment we do not know whether it is consistent for an inaccessible $\kappa$ to have:

\begin{equation}\label{eq:o1} \kappa^+ < \mf{t}(\kappa) \le \mf{u}(\kappa) < 2^\kappa \mbox{ with } \TP(\kappa^{++}) \mbox{ and/or } \neg \wKH(\kappa^{+}).\end{equation}

A similar limitation of this indestructibility method appears in \cite{HS:u} where it is left open whether it is consistent to have a strong limit $\aleph_\omega$ with 

\begin{equation}\label{eq:o2} 2^{\aleph_\omega}>\aleph_{\omega+1}, \mf{u}(\aleph_\omega) = \aleph_{\omega+1} \mbox{ and }\TP(\aleph_{\omega+2}).\end{equation} 

Sometimes an ad hoc argument can be found in these contexts (as in \cite{FHS2:large} where (\ref{eq:o2}) is obtained without $\mf{u}(\kappa) = \aleph_{\omega+1}$), but technical difficulties are usually substantial and increase with the complexity of the forcing (in our case $\P_\delta\da p_{\dot{U}}$).

An underlying open question -- and arguably more interesting -- is therefore whether Fact \ref{f:tp} and/or Theorem \ref{th:wKH} can be extended to include all $\kappa^+$-cc or at least $\kappa^+$-Knaster forcings in $V[\M(\kappa,\lambda)]$.

\medskip

\textbf{Question 2.} (The situation at $\omega$.) Let us briefly sketch an argument that our results also apply to $\omega$: By \cite{BS:smallu}, one can force the consistency of $\omega_1 \le \mf{u} = \nu < \mf{d}=\delta = 2^\omega$ for arbitrary regular uncountable cardinals $\nu < \delta$ over a ground model with $\GCH$. The forcing is of the form $\Add(\omega,\delta)  * \dot{\Q}(\nu)$, where $\dot{\Q}(\nu)$ is an iteration of the Mathias forcing of length $\nu$. This forcing is $\omega_1$-Knaster, so mimicking the argument in Theorem \ref{th:P_2}, using Lemma \ref{lm:embed} with $\kappa = \omega$, we obtain:

\begin{theorem}\label{th:P_3}
Suppose $\GCH$ holds, $\lambda$ is a weakly compact cardinal and $\mu\ge\lambda$ and $\omega_1 \le \kappa^* < \mu$ are regular cardinals. Then $$\P_3:= \Add(\kappa,\lambda) * [ ( \Add(\kappa,\mu) * \dot{\Q}(\kappa^*)) \x \dot{R}]$$ forces $$\omega_1 = \mf{t} \le \mf{u} = \kappa^* < 2^\omega = \mu$$ and the compactness principles $$\SR(\omega_2), \TP(\omega_2), \neg \wKH(\omega_1) \mbox{ and }\DSS(\omega_2).$$
\end{theorem}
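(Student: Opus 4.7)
The plan is to mirror the proof of Theorem \ref{th:P_2} with $\omega$ in place of the inaccessible $\kappa$, substituting the Blass--Shelah forcing $\dot{B}:=\Add(\omega,\mu)*\dot{Q}(\kappa^*)$ of \cite{BS:smallu} for $\dot{\P}_\delta\da p_{\UU}$. By \cite{BS:smallu}, $\dot{B}$ is forced by $\Add(\omega,\lambda)$ to be $\omega_1$-Knaster, to make $2^\omega=\mu$, and to realize $\mf{u}=\kappa^*$.

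The first step is the rearrangement via mutual genericity. Writing $G=F_0*(G^*\times F_1)$ for a $\P_3$-generic filter, with $F_0$, $G^*$ and $F_1$ generic for $\Add(\omega,\lambda)$, $\dot{B}$ and $\dot{R}$ respectively, the proof of Lemma \ref{lm:mutual}(ii),(iii) will carry over: the Knaster condition of $\dot{B}$ and the $\omega_1$-distributivity of $\dot{R}$ give mutual genericity of $G^*$ and $F_1$ over $V[F_0]$, and $F_1$ will add no new $\omega$-sequences over $V[F_0][G^*]$. Since $F_0*F_1$ is $\M(\omega,\lambda)$-generic by Definition \ref{def:M}, this yields the forcing equivalence $V[G]=V[\M(\omega,\lambda)][G^*]$, where $G^*$ is interpreted as $\dot{B}$-generic over $V[\M(\omega,\lambda)]$.

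Once the rearrangement is in place, the compactness principles follow by indestructibility applied in $V[\M(\omega,\lambda)]$, which by Fact \ref{f:M} already satisfies $\SR(\omega_2)$, $\TP(\omega_2)$, $\neg\wKH(\omega_1)$ and $\DSS(\omega_2)$. Fact \ref{f:sr} handles $\SR(\omega_2)$ since $\dot{B}$ is $\omega_1$-cc; Fact \ref{f:tp} applied with $\dot{\Q}=\dot{B}$ handles $\TP(\omega_2)$; Theorem \ref{th:wKH} applied with $\dot{\Q}=\dot{B}$ handles $\neg\wKH(\omega_1)$, with the productive $\omega_1$-cc of $\Add(\omega,\lambda)*\dot{B}$ coming from Lemma \ref{lm:sq}(iii) since both $\Add(\omega,\lambda)$ and $\dot{B}$ are $\omega_1$-Knaster; and Theorem \ref{th:dss} handles $\DSS(\omega_2)$, using that an $\omega_1$-cc forcing preserves all stationary subsets of ordinals.

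For the cardinal arithmetic, $2^\omega=\mu$ and $\mf{u}=\kappa^*$ will follow from the Blass--Shelah analysis performed in $V[F_0][G^*]$, persisting into $V[G]$ because the $\omega_1$-distributive $F_1$ adds no reals and hence preserves both the size of $2^\omega$ and the status of any ultrafilter base. To obtain $\mf{t}=\omega_1$ I plan to adapt Lemma \ref{lm:tower}: fixing $\alpha\in\mathcal{A}$ and using the complete embedding $\Add(\omega,\alpha)*\dot{\Add}(\omega_1,1)\hookrightarrow\M(\omega,\lambda)$, the filter $F_1$ yields a fresh subset of $\omega_1$ over $V[F_0|\alpha]$; since $\Add(\omega,[\alpha,\lambda))$ and $\dot{B}$ are both $\omega_1$-Knaster, Fact \ref{f:sq} implies the tree $(2^{<\omega_1},\subset)^{V[F_0|\alpha]}$ gains no new cofinal branches in $V[F_0][G^*]$, so the branch from $F_1$ remains new there, and Lemma \ref{lm:embed} applied with $\kappa=\omega$ (legitimate by the remark following it) gives $\mf{t}=\omega_1$ in $V[G]$. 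The main technical obstacles I anticipate are ensuring that the Blass--Shelah construction still realizes $\mf{u}=\kappa^*$ once its ground model is the Cohen extension by $\Add(\omega,\lambda)$ rather than a model of $\GCH$, and verifying that the Mitchell quotient adds its fresh $\omega_1$-branch not merely over $V[F_0]$ but over the larger model $V[F_0][G^*]$.
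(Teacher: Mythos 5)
Your proposal is correct and follows essentially the same route as the paper, whose own proof is just a sketch instructing the reader to mimic Theorem \ref{th:P_2} with the $\omega_1$-Knaster Blass--Shelah forcing in place of $\dot{\P}_\delta \da p_{\UU}$, to invoke Facts \ref{f:M}, \ref{f:sr}, \ref{f:tp}, Theorems \ref{th:wKH} and \ref{th:dss} for the compactness principles, and to use Lemma \ref{lm:embed} with $\kappa=\omega$ for $\mf{t}=\omega_1$. The two obstacles you flag are handled exactly as you suggest (absorbing $\Add(\omega,\lambda)$ into $\Add(\omega,\mu)$ for the $\mf{u}=\kappa^*$ computation, and the Knaster tail argument as in Lemma \ref{lm:tower} for the freshness of the branch), so nothing further is needed.
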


Notice that for $\omega = \kappa$ we obtain also the principle $\DSS(\omega_2$) and hence $\neg \AP(\omega_2)$ in the final model: It is known that $\M(\omega,\lambda)$ forces $\DSS(\omega_2)$ (see \cite{KR:DSS}), which is then preserved by the rest of the forcing $\P_3$, i.e.\ $\Add(\kappa,\mu) * \dot{\Q}(\kappa^*)$, by Theorem \ref{th:dss}. 

See Question 3 which is related to the principle $\DSS(\kappa^{++})$ for $\kappa >\omega$.

It is possible to compute the values of other cardinal invariants in this model as well. As in Question 1, we do not know how to construct a model in which all these compactness principles hold and yet $\omega_1 < \mf{t} = \mf{u} < 2^\omega$ holds as well.

\medskip

\textbf{Question 3.} We do not know whether Theorems \ref{th:P_1} or \ref{th:P_2} can be proved with  $\DSS(\kappa^{++})$ (and hence with $\neg \AP(\kappa^{++})$). See Remarks \ref{rm:new1} and \ref{rm:new2} for more context.

\medskip

\textbf{Question 4.} There are other compactness principles which may hold at $\kappa^{++}$ with suitably modified Theorems \ref{th:P_1} and \ref{th:P_2}. 

\medskip

\textbf{(a)} The club stationary reflection at $\nu^+$ for a regular $\nu$, $\CSR(\nu^+)$, states that for every stationary set $S$ in $\nu^+$ which contains only ordinals of cofinality $<\nu$, there is a club $C$ in $\nu^+$ such that $C$ intersected with ordinals of cofinality $\nu$ is contained in the set of reflection points of $S$. $\CSR(\nu^+)$ can be obtained by an iteration which shoots clubs through the sets of reflection points of all stationary sets with ordinals of cofinality $<\nu$, see \cite{m:sr} for more details. Honzik and Stejskalova showed in \cite{HS:u} a limited indestructibility result for $\CSR$: if $\kappa$ is regular, then Cohen forcing at $\kappa$ of arbitrary length preserves $\CSR(\kappa^{++})$, and so does the simple Prikry forcing at $\kappa$ if $\kappa$ is measurable. Recently, this has been extended to $\kappa^+$-linked forcings in \cite{TS:CSR} by Gilton and Stejskalova, but this is still not good enough for the application in this article.

\textbf{Remark.} $\CSR(\lambda)$ does not hold in $V[\M(\kappa,\lambda)]$, so in order to have $\CSR(\lambda)$ a (variant) of the iteration from \cite{m:sr} should be considered. Also note that $\CSR(\lambda)$ is compatible with $2^\kappa = \kappa^+$, and this is the setup of \cite{m:sr}. A generalization of the method in \cite{m:sr} for the Mitchell forcing and $2^\kappa = \kappa^{++} = \lambda$ appeared in \cite{SSGL:ind}.

\medskip

\textbf{(b)} The Guessing model principle at $\kappa^{++}$, denoted $\GMP(\kappa^{++})$, is a strong principle which implies $\neg \wKH(\kappa^+)$ and $\TP(\kappa^{++})$. $\GMP(\omega_2)$ is a consequence of $\PFA$, and $\GMP(\kappa^{++})$ holds in the Mitchell model $V[\M(\kappa,\lambda)]$ if $\lambda$ is supercompact in the ground model. Honzik, Lambie-Hanson, and Stejskalova proved in \cite{HLS:gmp} that $\GMP(\kappa^{++})$ is preserved over any model of $\GMP(\kappa^{++})$ by adding any number of Cohen subsets of $\kappa$, and its consequence $\neg \wKH(\kappa^+)$ is preserved over any model of $\GMP(\kappa^{++})$ by any $\kappa$-centered forcing. Preservation uder the $\kappa$-centered forcing notions is still not good enough for the present applications, but maybe the indestructibility result from \cite{HLS:gmp} can be strengthened accordingly to have $\GMP(\kappa^{++})$ in Theorems \ref{th:P_1} or \ref{th:P_2} (with $\lambda$ now being supercompact in the ground model).

\bibliography{mybiblio}
\bibliographystyle{amsplain}
\end{document}